\newtheorem{theorem}{Theorem}
\newtheorem{lemma}{Lemma}
\newtheorem{corollary}{Corollary}
\newtheorem{question}{Question}
\title{Dense metrizable subspaces in powers of Corson compacta}
\author{Arkady Leiderman}
\address{Department of Mathematics\\
Ben Gurion University of the Negev\\
P.O.B. 653\\
Be'er Sheva 8410501\\
Israel}
\email{arkady@math.bgu.ac.il}
\author{Santi Spadaro}
\address{Dipartimento di Ingegneria\\
Universit\`a degli Studi di Palermo\\
viale delle Scienze 8\\
90128 Palermo, Itay}
\email{santidomenico.spadaro@unipa.it}
\author{Stevo Todorcevic}
\address{Department of Mathematics \\
University of Toronto\\
Toronto, ON, Canada, M5S 2E4}
\email{stevo@math.toronto.edu}
\address{Institut de Math\'ematiques de Jussieu\\
Paris, France}
\email{stevo.todorcevic@cnrs.fr}
\address{Mathematical Institute, SASA\\
Belgrade, Serbia}
\email{stevo.todorcevic@sanu.ac.rs}
\subjclass[2020]{Primary: 54A25, 54A35, 03E35, 46B50}
\keywords{Corson compactum, dense metrizable subspace, countable chain condition, Martin's Axiom, strictly positive measure}
\begin{document}
\maketitle

\begin{abstract}
We characterize when the countable power of a Corson compactum has a dense metrizable subspace and construct consistent examples of Corson compacta whose countable power does not have a dense metrizable subspace. We also give several remarks about ccc Corson compacta and, as a byproduct, we obtain a new proof of Kunen and van Mill's characterization of when a Corson compactum supporting a strictly positive measure is metrizable.
\end{abstract}

\section{Introduction}

A compact space $X$ is called a \emph{Corson Compactum} if it can be embedded into a $\Sigma$-product of lines, that is, if there is a cardinal $\kappa$, a space $Y \subset \Sigma(\mathbb{R}^\kappa):=\{x \in \mathbb{R}^\kappa: |supp{(x)}| \leq \aleph_0 \}$ and a homeomorphism $f: X \to Y$.

Corson compacta are a class of spaces which is relevant to both topology and functional analysis. For example, every Eberlein compactum (i.e., a weakly compact subspace of a Banach space) is a Corson compactum and, for every Banach space $X$, if the unit ball in the dual $X^*$ is a Corson compactum then $X$ is Lindel\"of in the weak topology.

Shapirovskii \cite{Sh} proved that every Corson compactum has a dense first-countable subspace. Eberlein compacta even have dense metrizable subspaces (see \cite{AL}, Theorem 5), and this prompted the question of whether that was also true for Corson compacta. Gruenhage and the first author proved independently that the answer to this question is positive for Gul'ko compacta, a class of Corson compacta wider than that of Eberlein compacta (see \cite{Le} and \cite{Gr}). It is relatively easy to come up with a consistent example of a Corson compact space without a dense metrizable subspace (see the final section of our paper), but the first ZFC example of a Corson compact space without a dense metrizable subspace was only constructed by the third author in 1981 (see \cite{TCors}, \cite{THand}).

Clearly, the countable power of a Corson compactum is still a Corson compactum. This suggested us to investigate whether such a space must have a dense metrizable subspace. We first present a characterization of when the countable power of a Corson compactum has a dense metrizable subspace. Namely, we prove that if $X$ is a Corson compactum, then $X^\omega$ has a dense metrizable subspace if and only if it has a \emph{large} family of pairwise disjoint non-empty open sets. Exploiting this, we then show a few consistent examples of Corson compact spaces whose countable power does not contain a dense metrizable subspace. It is an open question whether such a space exists in ZFC. We also prove that the existence of a ccc counterexample to our question is equivalent to the failure of $MA_{\omega_1}$ for powerfully ccc posets and give a new proof to Kunen and van Mill's theorem stating that the existence of a non-metrizable Corson compactum with a strictly positive measure is equivalent to the failure of $MA_{\omega_1}$ for measure algebras.

All topological spaces are assumed to be at least Hausdorff. All measures are assumed to be Radon probability measures. For undefined notions and notation we refer the reader to \cite{En}, \cite{Je} and \cite{Ju}.

\section{Powers of Corson compact spaces}

Recall that a $\pi$-base $\mathcal{P}$ for a topological space $X$ is a family of non-empty open subsets of $X$ such that for every non-empty open set $U \subset X$, there is $P \in \mathcal{P}$ such that $P \subset U$. The $\pi$-weight of $X$ ($\pi w(X)$) is defined as the minimum cardinality of a $\pi$-base for $X$. Clearly, if $d(X)$ denotes the density of $X$, we have $d(X) \leq \pi w(X)$. For Corson compacta it is known that $\pi w(X)=d(X)=w(X)$ (see \cite{M}).

The following lemma is well-known. It was noted, for example, in \cite{Le}.

\begin{lemma} \label{LemArk}
A Corson compactum has a dense metrizable subspace if and only if it has a $\sigma$-disjoint $\pi$-base.
\end{lemma}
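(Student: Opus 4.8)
The plan is to prove both directions of the equivalence, using the fact that for Corson compacta the dense metrizable subspaces are intimately connected to $\sigma$-disjoint families of open sets.

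\medskip

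First I would prove the easier direction: if $X$ has a dense metrizable subspace $M$, then $X$ has a $\sigma$-disjoint $\pi$-base. Since $M$ is metrizable, by the Bing--Nagata--Smirnov metrization theorem it has a $\sigma$-discrete (hence $\sigma$-disjoint) base $\mathcal{B} = \bigcup_{n \in \omega} \mathcal{B}_n$, where each $\mathcal{B}_n$ is a discrete (in particular, disjoint) family of open subsets of $M$. For each $B \in \mathcal{B}$, write $B = U_B \cap M$ for some open $U_B \subseteq X$, choosing $U_B$ so that the disjointness within each level is preserved (this is where I would need to be slightly careful, shrinking the $U_B$ if necessary so that $\{U_B : B \in \mathcal{B}_n\}$ remains pairwise disjoint; this is possible because the $B$'s in a fixed level are separated by disjoint open sets in $X$, using that discrete families in the dense subspace can be swelled to disjoint open families upstairs). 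Then $\mathcal{P} = \{U_B : B \in \mathcal{B}\}$ is a $\sigma$-disjoint family of open subsets of $X$. To see it is a $\pi$-base, given a nonempty open $V \subseteq X$, density of $M$ gives a point of $V \cap M$, and then a basic set $B \subseteq V \cap M$ from $\mathcal{B}$; the corresponding $U_B$ can be arranged to lie inside $V$. This yields the $\sigma$-disjoint $\pi$-base.

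\medskip

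The harder and more interesting direction is the converse: from a $\sigma$-disjoint $\pi$-base $\mathcal{P} = \bigcup_{n \in \omega} \mathcal{P}_n$ (each $\mathcal{P}_n$ pairwise disjoint) I must manufacture a dense metrizable subspace. Here I would exploit the Corson structure: embed $X$ into $\Sigma(\mathbb{R}^\kappa)$. The idea is to use the $\sigma$-disjoint $\pi$-base to select a small ``dependency set'' of coordinates so that a suitable dense subspace sees only countably many coordinates locally, forcing first-countability and, via $\sigma$-disjointness, metrizability. Concretely, I would pick for each $P \in \mathcal{P}$ a point $x_P \in P$ and let $D = \{x_P : P \in \mathcal{P}\}$; since $\mathcal{P}$ is a $\pi$-base, $D$ is dense. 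The key claim is that the closure (or a suitable subspace built from $D$) is metrizable. To establish metrizability I would aim to produce a $\sigma$-disjoint base for the subspace, upgrading the $\pi$-base to an actual base by intersecting the elements of each $\mathcal{P}_n$ with the subspace and using that Corson compacta have a rich supply of clopen-like or small open sets coming from restricting coordinates. By Bing's metrization theorem, a regular space with a $\sigma$-disjoint base is metrizable, so the task reduces to verifying that the restricted family separates points and forms a base at each point of $D$.

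\medskip

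The main obstacle, as I see it, lies in this converse direction: a $\sigma$-disjoint $\pi$-base need not automatically restrict to a genuine base on a dense subspace, so the crux is to choose the dense set $D$ and the subspace carefully enough that the $\pi$-base elements actually form a neighborhood base at each point of $D$. I expect the Corson property to be essential precisely here --- it guarantees that points have countable support and that the topology is determined by countably many coordinates at a time, which should allow the disjointness of the $\pi$-base to be ``transferred'' into the separation and base conditions required by the metrization theorem. I would anticipate that the correct choice is to take $D$ to be a set meeting each $\pi$-base element and then verify that, because each level $\mathcal{P}_n$ is disjoint, at most one member of each level can contain a given point of $D$ in its closure, which is exactly the local finiteness/point-countability needed to run the metrization argument.
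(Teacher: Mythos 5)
Your first direction (dense metrizable subspace $\Rightarrow$ $\sigma$-disjoint $\pi$-base) is essentially right, and in fact simpler than you make it: no ``swelling'' or careful shrinking is needed, because if $M$ is dense in $X$ and $U_B \cap M = B$, $U_{B'} \cap M = B'$ with $B \cap B' = \emptyset$, then $U_B \cap U_{B'}$ is an open set missing the dense set $M$, hence empty. The only point that does need an argument is arranging $U_B \subseteq V$: since $U_B \subseteq \overline{U_B \cap M} = \overline{B}$, one first shrinks $V$ to $V'$ with $\overline{V'} \subseteq V$ by regularity and then picks $B \subseteq V' \cap M$.

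The converse direction, however, contains a genuine gap, located exactly where you place the ``main obstacle.'' Choosing one point $x_P$ in each $\pi$-base element yields a dense set $D$, but nothing in your sketch shows that $D$ (or any subspace built from it) is metrizable: the traces of the $P$'s on $D$ need not form a neighborhood base at any point of $D$, and your proposed remedy --- that disjointness of each level $\mathcal{P}_n$ makes the family point-countable on $D$ --- is far too weak, since a point-countable $\pi$-base is nowhere near a $\sigma$-disjoint \emph{base}, so Bing's metrization theorem cannot be invoked. What is missing are precisely the two results the paper combines: Shapirovskii's theorem that every Corson compactum has a dense subspace of points of countable character, and White's theorem that a regular \emph{first-countable} space has a dense metrizable subspace if and only if it has a $\sigma$-disjoint $\pi$-base. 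The paper's proof is exactly this combination: the dense first-countable subspace inherits a $\sigma$-disjoint $\pi$-base by taking traces (density makes disjointness and the $\pi$-base property survive), and White's theorem then produces a dense metrizable subspace of that subspace, hence of $X$. First countability is the ingredient for which you have no substitute: White's construction uses countable neighborhood bases at the selected points to recursively refine the $\pi$-base into an actual base for the selected subspace, which is how the selection of points like your $x_P$ gets controlled. Unless you are prepared to reprove White's theorem (a nontrivial recursion), the converse cannot be completed along the lines you describe.
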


\begin{proof}
It suffices to combine Shapirovskii's result from \cite{Sh} that every Corson compactum has a dense subspace of points of countable character with White's result from \cite{Wh} that for first-countable spaces, having a dense metrizable subspace is equivalent to having a $\sigma$-disjoint $\pi$-base.
\end{proof}

Recall that a \emph{cellular family} in a topological space $X$ is a family of pairwise disjoint non-empty open subsets of $X$.  The \emph{cellularity of $X$} ($c(X)$) is defined as the supremum of the cardinalities of cellular families in $X$. The cardinal function $\hat{c}(X)$ is defined as the minimum cardinal $\kappa$ such that $X$ does not have a cellular family of cardinality $\kappa$. Obviously $c(X) \leq \hat{c}(X)$ and $c(X) \leq \pi w(X)$.

The cardinal function $\hat{c}(X)$, in conjunction with the $\pi$-weight, can be used to characterize when the countable power of a Corson compact space has a dense metrizable subspace. We first need the following technical lemma.

\begin{lemma} \label{LemMain}
Let $X$ be any space and $Y$ be a space such that $\hat{c}(Y^\omega) > \pi w(Y) \geq \pi w(X)$. Then $X \times Y^\mu$ has a $\sigma$-disjoint $\pi$-base for every cardinal $\mu \in [\omega , \pi w(X)]$.
\end{lemma}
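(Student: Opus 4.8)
The plan is to produce a $\sigma$-disjoint $\pi$-base for $X \times Y^\mu$ directly. Write $\kappa = \pi w(X)$ and $\lambda = \pi w(Y)$, so that $\omega \leq \mu \leq \kappa \leq \lambda$. The crucial thing the hypothesis buys us is a reformulation: since $\hat{c}(Y^\omega) > \pi w(Y) = \lambda$, the space $Y^\omega$ carries a \emph{cellular family of size $\lambda$}, and more generally so does $Y^A$ for every countably infinite $A$. I would start from the standard $\pi$-base $\mathcal{B}$ of $X \times Y^\mu$ consisting of all basic boxes $q = B \times \prod_{i \in F} V_i$ (full on the remaining coordinates), where $B$ ranges over a fixed $\pi$-base of $X$ of size $\kappa$, $F$ ranges over $[\mu]^{<\omega}$, and each $V_i$ ranges over a fixed $\pi$-base of $Y$ of size $\lambda$. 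A routine cardinal computation gives $|\mathcal{B}| \leq \kappa \cdot \mu \cdot \lambda = \lambda$, so we may fix an injection $\sigma \colon \mathcal{B} \to \lambda$ that assigns to each box a distinct ``code''. The whole problem thus reduces to the combinatorial task of refining $\mathcal{B}$ into countably many cellular (i.e.\ pairwise disjoint) families, and the refinement is harmless for the $\pi$-base property: shrinking each member of a $\pi$-base always yields another $\pi$-base.

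The mechanism I would use to force disjointness is \emph{tagging}: reserve countably many pairwise disjoint infinite blocks of coordinates $R_0, R_1, \dots \subseteq \mu$ (possible because $\mu \geq \omega$), and fix in each factor $Y^{R_j} \cong Y^\omega$ a cellular family $\{W^{(j)}_\alpha : \alpha < \lambda\}$ of size $\lambda$, as provided by the hypothesis. Given a box $q \in \mathcal{B}$ with finite support $F$, let $j(q)$ be the least $j$ with $R_j \cap F = \emptyset$; this exists since $F$, being finite, meets only finitely many of the disjoint blocks. Now refine $q$ to $\hat{q} := q \cap W^{(j(q))}_{\sigma(q)}$, where $W^{(j(q))}_{\sigma(q)}$ is lifted to $X \times Y^\mu$ in the obvious way. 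Because $R_{j(q)}$ is disjoint from $F$, this simply adds restrictions on finitely many previously free coordinates, so $\hat{q}$ is a nonempty box with $\hat{q} \subseteq q$. Finally set $\mathcal{A}_j := \{\hat{q} : q \in \mathcal{B},\ j(q) = j\}$.

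It remains to verify the two defining properties. For cellularity of each $\mathcal{A}_j$: if $q \neq q'$ both have $j(q) = j(q') = j$, then $\sigma(q) \neq \sigma(q')$, so $\hat{q}$ and $\hat{q}'$ restrict the block $R_j$ to the disjoint cellular sets $W^{(j)}_{\sigma(q)}$ and $W^{(j)}_{\sigma(q')}$; hence $\hat{q} \cap \hat{q}' = \emptyset$, regardless of their behaviour on $X$ or on the other coordinates. For the $\pi$-base property: given any nonempty open $U$, first pick $q \in \mathcal{B}$ with $q \subseteq U$, then $\hat{q} \subseteq q \subseteq U$ and $\hat{q} \in \mathcal{A}_{j(q)}$. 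Thus $\bigcup_{j < \omega} \mathcal{A}_j$ is a $\sigma$-disjoint $\pi$-base for $X \times Y^\mu$, as required.

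The step I expect to be the genuine obstacle is precisely the passage to \emph{uncountable} $\mu$. For $\mu = \omega$ one can tag on the single coordinate just past the maximum of the support, grouping boxes by that maximum; but when $\mu$ is uncountable this grouping has uncountably many classes and fails to be $\sigma$-disjoint. The resolution above is to make the tagging site \emph{independent of where the support lives}, by reserving only countably many encoding blocks and rotating among them according to $j(q)$, while compressing all the distinguishing information of a box into the \emph{single} code $\sigma(q)$ realized through one block's cellular family. This is exactly where both quantitative hypotheses are consumed: the bound $\mu \leq \pi w(X) \leq \pi w(Y)$ keeps $|\mathcal{B}| \leq \lambda$ so that $\sigma$ can inject into $\lambda$, and $\hat{c}(Y^\omega) > \pi w(Y)$ supplies an alphabet of $\lambda$ pairwise disjoint open sets on each block to realize that injection. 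A minor point to check carefully is that the cellular family lives in $Y^\omega$ rather than in a single copy of $Y$, which is why I tag on infinite blocks $R_j$ rather than on single coordinates.
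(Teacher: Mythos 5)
Your proof is correct and is essentially the paper's own argument: both obtain $\sigma$-disjointness by intersecting each basic box with a member of a cellular family of size $\pi w(Y)$ (supplied by $\hat{c}(Y^\omega) > \pi w(Y)$) living on infinitely many coordinates disjoint from the box's support, chosen injectively so that distinct boxes in the same class get disjoint tags, with countably many classes indexed by which reserved coordinate set avoids the support. The only difference is bookkeeping: the paper tags on the complements $\mu \setminus A_n$ of an increasing exhaustion of $\mu$ and codes the boxes via a partition of $\pi w(Y)$ into $\mu$ many pieces of full size, whereas you tag on fixed pairwise disjoint blocks $R_j$ and compress all coding into a single global injection $\sigma \colon \mathcal{B} \to \pi w(Y)$.
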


\begin{proof}
Let $\{A_k: k < \omega \}$ be an increasing sequence of subsets of $\mu$ such that $\mu \setminus A_k$ is infinite, for every $k<\omega$ and $\bigcup \{A_k: k < \omega \} = \mu$. Let $\kappa=\pi w(Y)$.

Let $\mathcal{B}_X$ be a $\pi$-base for $X$ and let $\mathcal{B}_Y$ be a $\pi$-base for $Y$ such that $|\mathcal{B}_X| \leq |\mathcal{B}_Y| =\kappa$.

For every $a \in [\mu]^{<\omega}$ fix an enumeration $\{B^a_\xi: \xi < \kappa \}$ of the set $\mathcal{B}_X \times \mathcal{B}_Y^a$. Use the fact that $\mu \setminus A_k$ is infinite and $\hat{c}(Y^\omega) > \kappa$ to fix a cellular family $\{C^k_\xi: \xi < \kappa \}$ of size $\kappa$ in $Y^{\mu \setminus A_k}$.

Let $\{a^n_\alpha: \alpha < \mu \}$ be an enumeration of $[A_n]^{<\omega}$. Moreover, fix a partition $\{B_\alpha: \alpha < \mu \}$ of $\kappa$ into $\mu$ many sets of size $\kappa$ and enumerate $B_\alpha$ as $\{b^\alpha_\xi: \xi <\kappa \}$.

Define $\pi_{-1}: X \times Y^\mu \to X$ to be the usual projection onto $X$ and let:

$U(n, \alpha, \xi)=U(n,\alpha, \xi)_{-1} \times \prod_{\gamma < \mu} U(n, \alpha, \xi)_\gamma$, where:

$$
U(n, \alpha, \xi)_\gamma=\begin{cases}
\pi_{-1}(B^{a^n_\alpha}_\xi) & \mbox{if } \gamma=-1 \\
\pi_\gamma(B^{a^n_\alpha}_\xi) & \mbox{if } \gamma \in a^n_\alpha \\
Y & \mbox{if } \gamma \in A_n \setminus a^n_\alpha \\
\pi_\gamma(C^n_{b^\alpha_\xi}) & \mbox{if } \gamma \in \mu \setminus A_n
\end{cases}
$$

Note that $\mathcal{U}_n=\{U(n, \alpha, \xi): \alpha < \mu, \xi<\kappa \}$ is a disjoint collection of non-empty open sets and $\mathcal{U}=\bigcup_{n<\omega} \mathcal{U}_n$ is a $\pi$-base for $X \times Y^\mu$. To see the latter, let $W$ be a basic open set in $X \times Y^\mu$ and $s=supp(W)$.

Let $n<\omega$ be large enough so that $s \subset A_n$. Choose $\alpha < \mu$ so that $s \cap \mu=a^n_\alpha$ and $\xi < \kappa$ so that $B^{a^n_\alpha}_\xi \subset \tilde{W}$, where $\tilde{W}=\pi_{-1}(W) \times \prod_{\gamma \in a^n_\alpha} \pi_\gamma(W)$. Then we also have that $U(n, \alpha, \xi) \subset W$ and hence we are done.
\end{proof}

A first consequence of the above Lemma is that given any Corson compactum, we can find another Corson compactum whose product with it contains a dense metrizable subspace.

\begin{corollary}
Let $X$ be any Corson compactum. Then there is a Corson compactum $K$ such that $X \times K$ has a dense metrizable subspace.
\end{corollary}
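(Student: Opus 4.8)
The plan is to apply Lemma \ref{LemMain} directly, so the real work is producing a single Corson compactum $Y$ that simultaneously has small $\pi$-weight relative to $X$ and has a large cellular family in its countable power. More precisely, given the Corson compactum $X$, set $\kappa = \pi w(X) = w(X) = d(X)$ (these agree for Corson compacta by \cite{M}). I would look for a Corson compactum $Y$ with $\pi w(Y) = \kappa$ but $\hat{c}(Y^\omega) > \kappa$, i.e.\ $Y^\omega$ carries a cellular family of size $\kappa$ (in fact of size $\kappa^+$ or at least $>\kappa$). Once such a $Y$ is found, Lemma \ref{LemMain} applied with $\mu = \omega$ (which lies in $[\omega, \pi w(X)]$ since $\pi w(X) \geq \omega$) gives that $X \times Y^\omega$ has a $\sigma$-disjoint $\pi$-base. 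Setting $K = Y^\omega$, which is again a Corson compactum because countable products of Corson compacta are Corson, we conclude by Lemma \ref{LemArk} that $X \times K$ has a dense metrizable subspace.

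The crux is therefore the construction of $Y$. The natural candidate is a one-point compactification-type space, or more directly a space built so that its square (hence its countable power) already has a large cellularity while its own $\pi$-weight stays at $\kappa$. The cleanest choice I would try is $Y = A(\kappa)$, the one-point compactification of a discrete set of size $\kappa$: this is an Eberlein (hence Corson) compactum, its weight and $\pi$-weight equal $\kappa$, and the isolated points give $\kappa$ pairwise disjoint open singletons, so $\hat{c}(Y) > \kappa$ already, whence $\hat{c}(Y^\omega) \geq \hat{c}(Y) > \kappa$. Thus $Y = A(\kappa)$ satisfies $\hat{c}(Y^\omega) > \pi w(Y) = \kappa \geq \pi w(X)$, exactly the hypothesis of Lemma \ref{LemMain}.

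With that choice the verification is routine: I would check that $A(\kappa)$ embeds in a $\Sigma$-product (map the $\alpha$-th isolated point to the characteristic function of $\{\alpha\}$ and the point at infinity to $0$), confirming it is Corson, and that $w(A(\kappa)) = \kappa$ so $\pi w(A(\kappa)) = \kappa$. The large cellular family in $A(\kappa)$ (and a fortiori in its countable power) gives the strict inequality $\hat{c}(A(\kappa)^\omega) > \kappa$. Here I must be slightly careful about the degenerate case $\kappa = \omega$, where $X$ is already metrizable and the statement is trivial (take $K$ to be a point, or note $A(\omega)$ still works), but otherwise the argument is uniform.

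I expect no serious obstacle: the main step is simply recognizing that Lemma \ref{LemMain} has been engineered precisely so that feeding in a space whose own cellularity exceeds the $\pi$-weight of $X$ produces the desired $\sigma$-disjoint $\pi$-base, and that the one-point compactification $A(\kappa)$ is the most economical Corson compactum witnessing the hypothesis. The only thing to watch is matching the cardinal arithmetic in the hypothesis $\pi w(Y) \geq \pi w(X)$, which holds by construction since we set $\pi w(Y) = \pi w(X) = \kappa$.
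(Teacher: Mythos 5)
Your proposal is correct and is essentially identical to the paper's own proof: the paper also takes $\kappa = \pi w(X)$, sets $K = A(\kappa)^\omega$ where $A(\kappa)$ is the one-point compactification of a discrete set of size $\kappa$, and applies Lemma \ref{LemMain} together with Lemma \ref{LemArk}. The only difference is that you spell out the verification of the hypotheses (the cellular family of isolated singletons giving $\hat{c}(A(\kappa)^\omega) > \kappa$, the Corson embedding, and the degenerate case $\kappa = \omega$), which the paper leaves implicit.
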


\begin{proof}
Let $\kappa=\pi w(X)$ and denote with $A(\kappa)$ the one-point compactification of a discrete set of cardinality $\kappa$. Let $K=A(\kappa)^\omega$. Then $K$ is a Corson compactum and by Lemma $\ref{LemMain}$, $X \times K$ has a $\sigma$-disjoint $\pi$-base and hence a dense metrizable subspace.
\end{proof}

\begin{theorem} \label{mainthm}
Let $X$ be a Corson compactum. Then $X^\omega$ has a dense metrizable subspace if and only if $\hat{c}(X^\omega) > w(X)$.
\end{theorem}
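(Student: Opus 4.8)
The plan is to prove the two implications separately, using Lemma~\ref{LemArk} to trade ``dense metrizable subspace'' for ``$\sigma$-disjoint $\pi$-base'' throughout (legitimate, since $X^\omega$ is again a Corson compactum), and recalling that $\pi w(Z)=w(Z)$ for every Corson compactum $Z$, so that $\pi w(X^\omega)=w(X^\omega)=w(X)=:\kappa$ whenever $X$ is infinite. The case $\kappa\le\omega$ is degenerate: then $X^\omega$ is compact metrizable, hence has a dense metrizable subspace and, being infinite and separable, satisfies $\hat c(X^\omega)\ge\omega_1>\kappa$; both sides of the equivalence hold. So I assume $\kappa\ge\omega_1$.

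For the implication $\hat c(X^\omega)>w(X)\Rightarrow$ (dense metrizable subspace) I would simply feed the hypothesis into Lemma~\ref{LemMain}. Taking both the auxiliary space and the base space of that lemma to be our $X$, and $\mu=\omega$, its hypothesis $\hat c(Y^\omega)>\pi w(Y)\ge\pi w(X)$ becomes exactly $\hat c(X^\omega)>\kappa$, and $\mu=\omega$ lies in $[\omega,\pi w(X)]$ because $\kappa\ge\omega$. Lemma~\ref{LemMain} then produces a $\sigma$-disjoint $\pi$-base for $X\times X^\omega\cong X^\omega$, and Lemma~\ref{LemArk} concludes. This direction is essentially immediate.

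The substantive direction is the converse. Assuming $X^\omega$ has a dense metrizable subspace, Lemma~\ref{LemArk} gives a $\sigma$-disjoint $\pi$-base $\mathcal P=\bigcup_{n<\omega}\mathcal P_n$, each $\mathcal P_n$ cellular. Since $\mathcal P$ is a $\pi$-base, $|\mathcal P|\ge\pi w(X^\omega)=\kappa$, and as $\kappa$ is uncountable this forces $\sup_n|\mathcal P_n|=\kappa$. I want one cellular family of size $\kappa$; since cellular families are closed under subfamilies, such a family yields $\hat c(X^\omega)>\kappa=w(X)$. If some $|\mathcal P_n|\ge\kappa$ we are done at once, so the genuine difficulty is the singular case $\operatorname{cf}(\kappa)=\omega$ with every $|\mathcal P_n|<\kappa$. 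To handle it I would exploit the product and the multiplicativity of cellular families. Write $X^\omega\cong\prod_{k<\omega}Z_k$ with each $Z_k\cong X^\omega$ (split $\omega$ into infinitely many infinite blocks); each $Z_k$ then carries cellular families of sizes cofinal in $\kappa$, so fix in $Z_k$ a cellular family $\mathcal C_k$ with $|\mathcal C_k|=\lambda_k$, where $\langle\lambda_k\rangle$ is non-decreasing with supremum $\kappa$. The finite-support cylinders $\prod_{k<m}C_k\times\prod_{k\ge m}Z_k$ ($C_k\in\mathcal C_k$) are open and nonempty, and two of them are disjoint precisely when, regarded as nodes $\langle C_0,\dots,C_{m-1}\rangle$ of the tree $T$ of finite sequences, they are incomparable. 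Thus a cellular family of this form is exactly an antichain of $T$, whose level $m$ has $\prod_{k<m}\lambda_k$ nodes each splitting into $\lambda_m$ successors. A ``reserve-and-extend'' recursion builds an antichain meeting level $m$ in $\lambda_{m-1}$-many nodes while reserving cofinally many nodes to extend further, producing an antichain of size $\sum_m\lambda_{m-1}=\kappa$; the associated cellular family in $X^\omega$ has size $\kappa$.

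I expect the antichain construction to be the only delicate point. The naive ``take the largest $\mathcal P_n$'' argument collapses when $\operatorname{cf}(\kappa)=\omega$, and any single bounded-support box of $T$ (one level) only reaches $\prod_{k<m}\lambda_k<\kappa$. The thing to get right is that permitting unbounded but finite supports, i.e.\ a true antichain of $T$ rather than one of its levels, lifts the attainable cellularity from the individual values $\prod_{k<m}\lambda_k$ up to their supremum $\kappa$, and that this supremum is realized by a single antichain. Everything else, including the verification that incomparability corresponds to disjointness of cylinders and that the reserving leaves enough successors at each stage, is routine.
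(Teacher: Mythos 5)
Your proof is correct, and it coincides with the paper's argument everywhere except at one point: the countable-cofinality case of the forward implication. The easy direction (feeding $Y=X$, $\mu=\omega$ into Lemma~\ref{LemMain} and then applying Lemma~\ref{LemArk}) is exactly what the paper does, as is the reduction of the forward direction to producing a single cellular family of size $\kappa=w(X)$, together with the observation that this is immediate from the $\sigma$-disjoint $\pi$-base when $\operatorname{cf}(\kappa)>\omega$. Where you diverge is the singular case: the paper simply notes that the families $\mathcal{P}_n$ witness $c(X^\omega)=\kappa$ and then invokes the Erd\H{o}s--Tarski theorem (4.1 of \cite{Ju}) that $\hat{c}$ is never a singular cardinal, concluding $\hat{c}(X^\omega)=\kappa^+$. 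You instead give a self-contained construction: split $X^\omega$ into infinitely many copies $Z_k$ of itself, pick cellular families $\mathcal{C}_k$ in $Z_k$ with $|\mathcal{C}_k|=\lambda_k$ increasing to $\kappa$, and build an antichain of size $\sum_k\lambda_k=\kappa$ in the tree of finite boxes $\prod_{k<m}C_k\times\prod_{k\ge m}Z_k$. Your identification of disjointness of such boxes with incomparability in the tree is correct (comparable nodes give nested nonempty boxes; incomparable nodes disagree at some coordinate $k$ where both specify distinct, hence disjoint, members of $\mathcal{C}_k$), and the reserve-one-successor recursion does yield an antichain of full size $\kappa$: at stage $m$ you adjoin the $\lambda_m$ siblings of the reserved node, and every later node, being a descendant of the reserved node, disagrees with each of them at coordinate $m$. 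The trade-off: the paper's route is shorter but leans on a nontrivial external theorem valid for arbitrary spaces; yours is longer but elementary and exploits precisely the feature that makes this a theorem about countable powers, namely that $X^\omega$ is an infinite product (an argument in the spirit of \cite{TCell}). Both are complete proofs.
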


\begin{proof}
First of all recall that $d(X)=\pi w(X)=w(X)$, for every Corson compactum $X$.

The reverse implication follows by combining Lemma $\ref{LemMain}$ with Lemma $\ref{LemArk}$. For the direct implication, let $X$ be a Corson compactum such that $X^\omega$ has a dense metrizable subspace. Then $X^\omega$ has a $\sigma$-disjoint $\pi$-base $\mathcal{B}=\bigcup_{n<\omega} \mathcal{B}_n$, where $\mathcal{B}_n$ is pairwise disjoint. Since $\pi w(X^\omega)=\pi w(X)$ we must have $|\mathcal{B}| \geq \pi w(X)$. We consider two cases.

\begin{enumerate}

\item \noindent {\bf $\pi w(X)$ has uncountable cofinality:} then there must be $n<\omega$ such that $|\mathcal{B}_n| \geq \pi w(X)$. So $X^\omega$ has a cellular family of cardinality $\geq \pi w(X)$. 

\item \noindent {\bf $\pi w(X)$ has countable cofinality:} then let $\{\kappa_n: n < \omega\} \subset \pi w(X)$ be a sequence of cardinals such that $\sup \{\kappa_n: n < \omega \}=\pi w(X)$. Then for each $n<\omega$ there must be $k < \omega$ such that $|\mathcal{B}_k| \geq \kappa_n$. Therefore $c(X^\omega) = \pi w(X)$. Since $\pi w(X)$ is a singular cardinal, by 4.1 of \cite{Ju} we have $\hat{c}(X^\omega)=\pi w(X)^+$, and we are done.
\end{enumerate}
 \end{proof}
 
 \begin{question} \label{mainquest}
 Is there a Corson compactum $X$ such that $X^\omega$ has no dense metrizable subspace?
 \end{question}
 
In the next few sections we will provide various consistent examples answering the above question in the positive.

\section{Corson compacta with the countable chain condition}

One way to construct an example of a Corson compactum whose countable power doesn't have a dense metrizable subspace would be to construct a Corson compactum $X$ such that $X^\omega$ is ccc but $X$ is not separable (or equivalently, $X$ is not metrizable). There is no way to do this in ZFC since, according to a result of Shapirovskii (see, for example, Theorem 3.1 and Corollary 3.2 of \cite{TChain}) under $MA_{\omega_1}$ every compact countably tight ccc space is separable. This is actually equivalent to $MA_{\omega_1}$, as the following theorem shows.

\begin{theorem} \label{cccequiv}
The following are equivalent:

\begin{enumerate}
\item \label{cccCorson} There is a ccc Corson compactum without a dense metrizable subspace.
%\item \label{cccCompact} There is a compact ccc non-separable space of countable tightness.
\item \label{nwdCompact} There is a compact ccc space which can be covered by $\omega_1$ nowhere dense subsets.
\item \label{notMAomega1} $MA_{\omega_1}$ fails
\end{enumerate}
\end{theorem}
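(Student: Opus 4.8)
The plan is to establish the equivalences by first recasting condition~(\ref{cccCorson}) in a more usable form. Combining Lemma~\ref{LemArk} with the countable chain condition, a ccc Corson compactum $X$ has a dense metrizable subspace if and only if it is metrizable: any $\sigma$-disjoint $\pi$-base of a ccc space is a countable union of countable cellular families, hence countable, so it would witness $\pi w(X)=\omega$, and since $\pi w(X)=w(X)$ for Corson compacta this forces $X$ to be second countable. As a separable Corson compactum is metrizable (because $d(X)=w(X)$), condition~(\ref{cccCorson}) is equivalent to the existence of a \emph{non-separable} ccc Corson compactum, and this is the statement I will manipulate. I will then prove (\ref{cccCorson})$\Leftrightarrow$(\ref{notMAomega1}) directly and treat (\ref{nwdCompact})$\Leftrightarrow$(\ref{notMAomega1}) as the standard Baire-category reformulation.

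For (\ref{cccCorson})$\Rightarrow$(\ref{notMAomega1}) I would simply invoke the cited result of Shapirovskii (Theorem 3.1 and Corollary 3.2 of \cite{TChain}): every Corson compactum is Fr\'echet--Urysohn, hence countably tight, so under $MA_{\omega_1}$ every ccc Corson compactum would be separable, hence metrizable. Thus a non-separable ccc Corson compactum forces $MA_{\omega_1}$ to fail.

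The equivalence (\ref{nwdCompact})$\Leftrightarrow$(\ref{notMAomega1}) I would include for completeness as the Baire-category version of Martin's Axiom for compact ccc spaces. For (\ref{notMAomega1})$\Rightarrow$(\ref{nwdCompact}), take a ccc poset $P$ and dense sets $\{D_\alpha:\alpha<\omega_1\}$ met by no single filter; replacing each $D_\alpha$ by a maximal antichain contained in it and passing to the Boolean completion $B=RO(P)$, the sets $G_\alpha=\bigcup_{d\in D_\alpha}[d]$ are dense open in the ccc compactum $St(B)$, and $\bigcap_\alpha G_\alpha=\varnothing$, since a point of the intersection would be an ultrafilter, hence a filter on $P$, meeting every $D_\alpha$. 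Therefore $St(B)$ is covered by the $\omega_1$ nowhere dense closed sets $St(B)\setminus G_\alpha$. Conversely, given a ccc compactum $K=\bigcup_{\alpha<\omega_1}N_\alpha$ with each $N_\alpha$ closed and nowhere dense, the poset of nonempty open subsets of $K$ is ccc and the sets $\{V:\overline V\cap N_\alpha=\varnothing\}$ are dense by regularity; a filter meeting all of them would, by compactness of $K$, produce a point lying outside every $N_\alpha$, which is impossible, so $MA_{\omega_1}$ fails.

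The heart of the matter, and the step I expect to be by far the hardest, is (\ref{notMAomega1})$\Rightarrow$(\ref{cccCorson}): manufacturing a genuinely non-separable ccc \emph{Corson} compactum from a failure of $MA_{\omega_1}$. The natural first attempt is to reuse the data above, but now refining so that the $D_\alpha$ form a sequence of countable maximal antichains increasing under refinement; then the $G_\alpha$ are \emph{decreasing} with $\bigcap_\alpha G_\alpha=\varnothing$. Setting $S=\bigsqcup_\alpha D_\alpha$, the map $e\colon St(B)\to 2^{S}$ given by $e(x)(\alpha,d)=\chi_{[d]}(x)$ is continuous, and its image lands in the $\Sigma$-product: indeed $\operatorname{supp}(e(x))\subseteq\{(\alpha,d):\alpha<\gamma_x\}$ where $\gamma_x=\min\{\alpha:x\notin G_\alpha\}$ is a countable ordinal, so $X=e(St(B))$ is automatically a ccc Corson compactum. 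The obstacle is that this $X$ need not be non-separable: for a Cohen-type witness the map $e$ may collapse $St(B)$ onto a metrizable quotient, so the countable-support bookkeeping alone does not suffice. Overcoming this is precisely where the combinatorial strength of $\neg MA_{\omega_1}$ beyond what $\sigma$-centered posets provide must enter, either by choosing the witnessing poset so that no countable subfamily of $\{[d]:(\alpha,d)\in S\}$ separates the points $X$ must separate, or by building the point-countable $T_0$-separating family by a transfinite recursion that diagonalizes against prospective countable dense sets while preserving both point-countability and ccc. I expect the bulk of the technical work, and the genuine use of the hypothesis, to be concentrated in securing this non-separability.
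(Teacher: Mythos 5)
Your reduction of condition~(\ref{cccCorson}) to the existence of a non-separable (equivalently, non-metrizable) ccc Corson compactum is correct and matches what the paper does implicitly, and your implication (\ref{cccCorson})~$\Rightarrow$~(\ref{notMAomega1}) via Shapirovskii's theorem is exactly the paper's argument. The equivalence (\ref{nwdCompact})~$\Leftrightarrow$~(\ref{notMAomega1}) is indeed standard, and the paper simply cites Fremlin for it; a word of caution on your sketch, though: an ultrafilter $x$ on $RO(P)$ does not restrict to a filter on $P$ --- the set $\{p \in P : [p] \in x\}$ is upward closed and linked but need not be downward directed, and the standard proofs repair this by first reducing to posets of size $\aleph_1$ and adding dense sets that enforce directedness.

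The genuine gap is the implication (\ref{notMAomega1})~$\Rightarrow$~(\ref{cccCorson}), which you explicitly leave open: you describe an attempted construction (pushing the Stone space into a $\Sigma$-product via countable maximal antichains), correctly observe yourself that it fails to guarantee non-separability, and then only gesture at ``a transfinite recursion that diagonalizes against prospective countable dense sets'' without carrying it out. This is the heart of the theorem, and it is missing. The tool you need is the Todorcevic--Velickovic theorem from \cite{TV}: if $MA_{\omega_1}$ fails, there is a ccc poset $\mathbb{P}$ and an uncountable set $E \subseteq \mathbb{P}$ with \emph{no uncountable centered subset}. Given such an $E$, the paper takes
$$X=\{A \subseteq E: A \textrm{ is centered in } \mathbb{P}\} \subseteq 2^E.$$
Since $E$ has no uncountable centered subset, every member of $X$ is countable, so $X \subseteq \Sigma(2^E)$ and $X$ is a Corson compactum; $X$ contains the uncountable discrete family $\{\{p\}: p \in E\}$, so it is non-metrizable; and a $\Delta$-system argument turns any putative uncountable cellular family of basic open subsets of $X$ into an uncountable antichain in $\mathbb{P}$, so $X$ is ccc. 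Neither of your two proposed repair strategies (choosing the witnessing poset so that no countable family of antichain elements separates points, or a preservation-by-recursion argument) leads in any evident way to this construction; the key insight --- that the centered subsets of a set with no uncountable centered subset automatically form a Corson compactum whose non-metrizability is witnessed by singletons --- is absent from your proposal.
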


\begin{proof}
Items ($\ref{nwdCompact}$) and ($\ref{notMAomega1}$) are well known to be equivalent (see, for example, \cite{Fr}) and the implication ($\ref{cccCorson}) \Rightarrow (\ref{notMAomega1}$) follows from the aforementioned result of Shapirovskii, so let us assume that $MA_{\omega_1}$ fails and deduce the existence of a ccc Corson compactum without a dense metrizable subspace. By \cite{TV}, if $MA_{\omega_1}$ fails there is a ccc poset $\mathbb{P}$ and an uncountable set $E \subset \mathbb{P}$ without an uncountable \emph{centered} subset. Recall that a subset $C$ of $\mathbb{P}$ is called centered if for every finite $F \subset C$ there is $q \in \mathbb{P}$ such that $q \leq p$, for every $p \in F$.

Let 

$$X=\{A \subset E: A \textrm{ is centered in } \mathbb{P} \}$$

 \noindent viewed as a subspace of $2^E$ via identification of each set with its characteristic function. Since centered subsets of $E$ are countable $X$ is in fact a subset of the $\Sigma$-product $\Sigma(2^E)$, thus $X$ is a Corson compactum. Since the compact space $X$ contains the uncountable discrete set $\{\{p\} : p \in \mathbb{P} \}$, it is not metrizable. To see why $X$ is ccc, suppose by contradiction it has an uncountable cellular family $\mathcal{U}$. Without loss of generality we can assume that each element of $\mathcal{U}$ is a basic open set, that is $\mathcal{U}=\{B_\sigma: \sigma \in U\}$, where $U$ is an uncountable family of partial functions from $E$ to $2$ and $B_\sigma=\{x \in X: \sigma \subset x \}$. By a standard $\Delta$-system argument, we can assume that $\sigma \cup \tau$ is a function, for every $\sigma, \tau \in U$. This is equivalent to saying that $B_\sigma \cap B_\tau=\emptyset$ if and only if no centered subset of $E$ contains both $\sigma^{-1}(1)$ and $\tau^{-1}(1)$. For every $\sigma \in U$, let $p_\sigma$ be a common extension of the elements in $\sigma^{-1}(1)$. Then the fact that $\{B_\sigma: \sigma \in U \}$ is a cellular family in $X$ implies that $\{p_\sigma: \sigma \in U\}$ is an uncountable antichain in $\mathbb{P}$ and that is a contradiction.

\end{proof}

We note that the above theorem can also be derived from Theorem 3.4 of \cite{TChain}, which states that $MA_{\omega_1}$ is equivalent to the statement that every compact first-countable ccc space is separable. Indeed, if $X$ is a first countable compactum, then it is a compact space of countable tightness and hence, by a result of Shapirovskii, it admits an irreducible map onto a subspace $Y$ of a $\Sigma$-product $\Sigma(\mathbb{R}^\kappa)$, for some cardinal $\kappa$. Since continuous maps preserve the countable chain condition and irreducible maps preserve density, it follows that $Y$ is a ccc Corson compactum such that $d(Y)=d(X)$. Since every separable Corson compactum is metrizable it turns out that $Y$ is metrizable if and only if $X$ is separable. 

Recall that a poset $(\mathbb{P}, \leq)$ is said to be \emph{powerfully ccc} ((see \cite{TMart})) if each of its finite powers is ccc. Powerfully ccc posets have turned out to be sufficient for all known applications of Martin's Axiom and indeed, it is still unknown whether Martin's Axiom is equivalent to Martin's Axiom restricted to powerfully ccc posets. To guarantee that $X^\omega$ does not have a dense metrizable subspace we need that $X^\omega$ is a non-metrizable ccc Corson compactum.

That leads us to the following variation of the above theorem, which is proved in a similar way:

\begin{theorem} \label{omegaccc}
The following are equivalent:

\begin{enumerate}
\item \label{BcccCorson} There is a Corson compactum $X$ such that $X^\omega$ is ccc but does not have a dense metrizable subspace.
%\item \label{cccCompact} There is a compact ccc non-separable space of countable tightness.
\item \label{BnwdCompact} There is a compact space $X$ such that $X^\omega$ is ccc and can be covered by $\omega_1$ nowhere dense subsets.
\item \label{BnotMAomega1} $MA_{\omega_1}$ for powerfully ccc posets fails.
\item \label{piweightCorson} There is a ccc Corson compactum $X$ of weight $\omega_1$ such that $X^\omega$ does not have a dense metrizable subspace.
\end{enumerate}
\end{theorem}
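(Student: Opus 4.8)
The plan is to follow the template of Theorem \ref{cccequiv} closely, replacing ``ccc'' by ``$X^\omega$ is ccc'' throughout and $MA_{\omega_1}$ by its restriction to powerfully ccc posets, while using Theorem \ref{mainthm} to absorb the weight-$\omega_1$ clause. The organizing observation is the following consequence of Theorem \ref{mainthm}: if $X$ is a Corson compactum with $w(X)=\omega_1$, then $\hat c(X^\omega)>w(X)$ just means that $X^\omega$ carries an uncountable cellular family, so $X^\omega$ has a dense metrizable subspace if and only if $X^\omega$ is \emph{not} ccc. Thus at weight $\omega_1$ the two clauses of item (\ref{BcccCorson}) collapse: ``$X^\omega$ is ccc but has no dense metrizable subspace'' is the same as ``$X^\omega$ is ccc''. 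I will also use the classical fact that an arbitrary product is ccc if and only if each of its finite subproducts is ccc; applied to $X^\omega$ this says $X^\omega$ is ccc iff every finite power $X^n$ is ccc, which is exactly the topological counterpart of $\mathbb{P}$ being powerfully ccc.

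The implication (\ref{piweightCorson}) $\Rightarrow$ (\ref{BcccCorson}) is then immediate from the observation above: a ccc Corson compactum $X$ of weight $\omega_1$ whose power $X^\omega$ has no dense metrizable subspace automatically has $X^\omega$ ccc. For (\ref{BcccCorson}) $\Rightarrow$ (\ref{BnotMAomega1}) I will argue by contraposition, using the powerfully ccc refinement of the Shapirovskii-type statement invoked in Theorem \ref{cccequiv}: assuming $MA_{\omega_1}$ for powerfully ccc posets, a compact countably tight space $X$ all of whose finite powers are ccc is separable, because the poset of nonempty open subsets of $X$ is then powerfully ccc and one runs the usual argument pitting Martin's Axiom against dense sets. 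A Corson compactum is countably tight, so if $X^\omega$ is ccc then $X$ is separable, hence metrizable, hence $X^\omega$ is metrizable and trivially has a dense metrizable subspace; this kills (\ref{BcccCorson}).

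For (\ref{BnotMAomega1}) $\Rightarrow$ (\ref{piweightCorson}) I will reproduce the construction from Theorem \ref{cccequiv} with a powerfully ccc input. Starting from a powerfully ccc poset $\mathbb{P}$ together with an uncountable $E \subset \mathbb{P}$ of size $\omega_1$ admitting no uncountable centered subset, set $X=\{A\subset E: A \textrm{ is centered in } \mathbb{P}\} \subset \Sigma(2^E)$; exactly as before $X$ is a non-metrizable Corson compactum, now of weight $\omega_1$. The one genuinely new point is that each finite power $X^n$ is ccc. This is checked by the same $\Delta$-system argument: given $\omega_1$ basic open boxes $\prod_{i<n} B_{\sigma^\alpha_i}$ one thins out so that for every coordinate $i$ the supports form a $\Delta$-system on which the $\sigma^\alpha_i$ agree, replaces each coordinate by a common extension $p^\alpha_i$ of $(\sigma^\alpha_i)^{-1}(1)$, and observes that disjointness of two boxes forces incompatibility of $(p^\alpha_i)_{i<n}$ and $(p^\beta_i)_{i<n}$ in $\mathbb{P}^n$; an uncountable cellular family in $X^n$ thus yields an uncountable antichain in $\mathbb{P}^n$, contradicting powerful cccness. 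Hence $X^\omega$ is ccc, and since $w(X)=\omega_1$ the observation of the first paragraph (via Theorem \ref{mainthm}) gives that $X^\omega$ has no dense metrizable subspace, which is (\ref{piweightCorson}).

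Finally (\ref{BnwdCompact}) $\Leftrightarrow$ (\ref{BnotMAomega1}) is the powerfully ccc version of the well-known equivalence used in Theorem \ref{cccequiv}: passing between a compact space $X$ and the poset $\mathbb{P}_X$ of its nonempty open subsets (equivalently, the regular open algebra $RO(X)$), the condition ``$X^\omega$ is ccc'' corresponds to ``$\mathbb{P}_X$ is powerfully ccc'', while covering $X$ by $\omega_1$ nowhere dense sets corresponds to a family of $\omega_1$ dense subsets of $\mathbb{P}_X$ met by no filter; the Stone space of the completion of a powerfully ccc poset supplies the converse direction. The step I expect to be the main obstacle is securing the powerfully ccc analogue of the input from \cite{TV}, namely that the failure of $MA_{\omega_1}$ for powerfully ccc posets really produces a \emph{powerfully} ccc poset $\mathbb{P}$ with an uncountable subset containing no uncountable centered set; this requires verifying that the Todorcevic--Velickovic construction (or the passage through $\mathbb{P}_X$ and $RO(X)$) stays within the class of powerfully ccc posets, and is where the argument differs substantively from the ccc case rather than merely transcribing it.
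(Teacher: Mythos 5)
Your proposal is correct and takes essentially the same approach as the paper: the paper likewise proves the equivalence of (\ref{BcccCorson}), (\ref{BnwdCompact}) and (\ref{BnotMAomega1}) by transcribing the argument of Theorem \ref{cccequiv} into the powerfully ccc setting, obtains (\ref{piweightCorson}) $\Rightarrow$ (\ref{BcccCorson}) from Theorem \ref{mainthm} exactly via your weight-$\omega_1$ observation, and proves (\ref{BnotMAomega1}) $\Rightarrow$ (\ref{piweightCorson}) with the same construction $X=\{A \subset E: A \textrm{ is centered in } \mathbb{P}\}$ for a powerfully ccc $\mathbb{P}$ and $E$ of size $\aleph_1$ with no uncountable centered subset. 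The step you flag as the main obstacle --- that the failure of $MA_{\omega_1}$ for powerfully ccc posets yields a \emph{powerfully} ccc poset with an uncountable set having no uncountable centered subset --- is precisely what the paper asserts without further justification as the analogue of the cited result of \cite{TV}, so your write-up (which in addition spells out the $\Delta$-system argument showing each $X^n$ is ccc) is no less complete than the paper's own proof.
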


\begin{proof}
The equivalence of ($\ref{BcccCorson}$), ($\ref{BnwdCompact}$), ($\ref{BnotMAomega1}$) is proved by a similar argument as the one proving Theorem $\ref{cccequiv}$. To prove the equivalence of ($\ref{BnotMAomega1}$) and ($\ref{piweightCorson}$) let us assume that there is a ccc Corson compactum $X$ such that $w(X)=\aleph_1$ and $X^\omega$ does not have a dense metrizable subspace. Then by Theorem $\ref{mainthm}$ $X^\omega$ is ccc, which implies ($\ref{BcccCorson}$). Vice versa, if $MA_{\omega_1}$ for powerfully ccc posets fails, then there is a powerfully ccc poset $\mathbb{P}$ containing an uncountable set $E$ without uncountable centered subsets. Without loss of generality we may assume that $|E|=\aleph_1$. Define:

$$X=\{A \subset E: A \textrm{ is centered in } \mathbb{P} \}$$

Then, by the proof of Theorem $\ref{cccequiv}$, $X \subset \Sigma(2^{\omega_1})$ (and hence $X$ is a Corson compactum of $\pi$-weight $\omega_1$), $X^\omega$ is ccc and $X^\omega$ has no dense metrizable subspace.
\end{proof}

\begin{corollary}
Assume a Suslin tree exists. Then there is a Corson compactum $X$ such that $X^\omega$ does not have a dense metrizable subspace.
\end{corollary}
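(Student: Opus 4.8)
The plan is to deduce the corollary from Theorem \ref{omegaccc}: it suffices to produce, from a Suslin tree $T$, a \emph{powerfully ccc} poset witnessing the failure of $MA_{\omega_1}$ for powerfully ccc posets, since item (\ref{BnotMAomega1}) of that theorem then immediately yields the desired Corson compactum. The tree $T$ itself, viewed as a forcing, is ccc and has no uncountable centered (that is, linearly ordered) subset, so it is tempting to feed $T$ straight into the construction of Theorem \ref{cccequiv}. However, $T$ is \emph{not} powerfully ccc: if $s^\frown 0, s^\frown 1$ denote two distinct immediate successors of a node $s$, then $\{(t^\frown 0, t^\frown 1) : t \in T\}$ is an uncountable antichain in $T \times T$. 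Thus the naive attempt fails, and a different poset is needed.

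The poset I would use instead is the specialization forcing $\mathbb{S}(T)$, whose conditions are the finite partial functions $p \colon T \to \omega$ that are \emph{specializing}, i.e. $p(s) \neq p(t)$ whenever $s <_T t$ and $s, t \in \mathrm{dom}(p)$, ordered by reverse inclusion. First I would check that $\mathbb{S}(T)$ refutes $MA_{\omega_1}$: the sets $D_t = \{ p : t \in \mathrm{dom}(p) \}$, for $t \in T$, form $\omega_1$ many dense subsets, and a filter meeting all of them would produce a total specializing function $f \colon T \to \omega$, whose fibres $f^{-1}(n)$ would be countably many antichains covering $T$. Since $T$ is Suslin its antichains are countable, so a special $T$ would be countable, a contradiction.

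The heart of the argument is to verify that $\mathbb{S}(T)$ is powerfully ccc, and the device that makes this work is the identification $\mathbb{S}(T)^n \cong \mathbb{S}(T^{(n)})$, where $T^{(n)}$ denotes the disjoint union of $n$ copies of $T$. Indeed, a condition of $\mathbb{S}(T)^n$ is an $n$-tuple of finite specializing functions on $T$, which is exactly a finite specializing function on $T^{(n)}$, because comparability in $T^{(n)}$ only ever occurs within a single copy. Now $T^{(n)}$ is again an Aronszajn tree, so the classical theorem of Baumgartner, Malitz and Reinhardt that every Aronszajn tree can be specialized by a ccc forcing tells us that $\mathbb{S}(T^{(n)})$ is ccc; hence every finite power of $\mathbb{S}(T)$ is ccc, i.e. $\mathbb{S}(T)$ is powerfully ccc.

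Putting these together, $\mathbb{S}(T)$ is a powerfully ccc poset for which $MA_{\omega_1}$ fails, so item (\ref{BnotMAomega1}) of Theorem \ref{omegaccc} holds and, by that theorem, there is a Corson compactum $X$ whose countable power $X^\omega$ has no dense metrizable subspace. The only genuinely non-routine step is the powerfully ccc verification: the main obstacle is precisely that the Suslin tree cannot be used as a forcing directly, and it is resolved by passing to $\mathbb{S}(T)$, whose finite powers remain specialization forcings of Aronszajn trees. As an alternative that avoids quoting $MA_{\omega_1}$, I could feed $\mathbb{S}(T)$ directly into the construction of Theorem \ref{cccequiv}: the conditions $p_t = \{(t,0)\}$, for $t \in T$, form an uncountable subset of $\mathbb{S}(T)$ whose centered subfamilies correspond exactly to antichains of $T$ and are therefore countable, which is the hypothesis needed to run that construction.
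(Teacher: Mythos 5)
Your proof is correct, and its skeleton is the same as the paper's: extract from the Suslin tree $T$ a \emph{powerfully ccc} poset witnessing the failure of $MA_{\omega_1}$ for powerfully ccc posets, then invoke item (\ref{BnotMAomega1}) of Theorem \ref{omegaccc}. The difference is the witnessing poset. The paper uses the poset of finite antichains of $T$ ordered by reverse inclusion, citing Lemma 9.2 of \cite{THand} for the fact that it is powerfully ccc, and leaves the derivation of the failure of $MA_{\omega_1}$ as ``well known.'' You use the specialization poset $\mathbb{S}(T)$ instead, proving powerful ccc-ness via the isomorphism $\mathbb{S}(T)^n \cong \mathbb{S}(T^{(n)})$ plus the Baumgartner--Malitz--Reinhardt theorem, and proving the failure of $MA_{\omega_1}$ explicitly via the dense sets $D_t$. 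These two posets are siblings resting on the same combinatorial core (the BMR full-incomparability lemma for trees with no uncountable chains, which is also what underlies the cited Lemma 9.2), so neither route is more elementary, but yours buys two things: first, the $MA_{\omega_1}$-failure argument is genuinely cleaner for $\mathbb{S}(T)$, since applying $MA_{\omega_1}$ directly to the antichain poset requires some care (a finite \emph{maximal} antichain, e.g.\ the singleton of a root, is an atom of that poset, making Martin's Axiom vacuous below it, so one must first pass to a variant of $T$ with no finite maximal antichains or reroute through specialization anyway); second, your closing alternative --- exhibiting the uncountable set $E = \{\{(t,0)\}: t \in T\} \subset \mathbb{S}(T)$ whose centered subsets are exactly the antichains of $T$ --- plugs directly into the construction in the proof of Theorem \ref{cccequiv}, bypassing the appeal to \cite{TV}. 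Two cosmetic points: your uncountable antichain in $T\times T$ requires $t$ to range over splitting nodes (of which every Suslin tree has uncountably many, so this is harmless), and $T^{(n)}$ is strictly speaking a forest rather than a tree, but the BMR ccc theorem only needs the absence of uncountable chains, which $T^{(n)}$ inherits from $T$.
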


\begin{proof}
If there is a Suslin tree then $MA_{\omega_1}$ for powerfully ccc posets fails. This follows from the well-known fact that the poset of all finite antichains of a given tree with no uncountable branches with reverse containment is powerfully ccc (see, for example, Lemma 9.2 of \cite{THand}).
\end{proof}

\begin{question}
Is there a natural weakening of Martin's Axiom which is equivalent to the statement that the countable power of every ccc Corson compactum has a dense metrizable subspace?
\end{question}

The most natural class of Corson compacta such that $X^\omega$ is ccc is the class of Corson compacta which support a \emph{strictly positive} measure, that is, a measure $\mu$ such that $\mu(U)>0$, for every non-empty open subset $U$ of $X$. Kunen and van Mill \cite{KvM} proved that the existence of a non-metrizable Corson compactum supporting a strictly positive measure is equivalent to the statement that $MA_{\omega_1}$ for measure algebras fails, a statement considerable stronger than the statement ($\ref{BnotMAomega1}$) of Theorem $\ref{omegaccc}$.

\begin{theorem} \label{KvMTheorem}
(Kunen and van Mill, \cite{KvM}) The following are equivalent:

\begin{enumerate}
\item \label{CorsonMeas}  Every Corson compactum with a strictly positive measure is metrizable.
\item \label{Precaliber}  $MA_{\omega_1}$ for measure algebras.
\end{enumerate}
\end{theorem}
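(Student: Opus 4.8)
The plan is to deduce the equivalence from a reformulation of condition (\ref{Precaliber}) together with two complementary arguments, one topological and one measure-theoretic. First I would record the standard fact (see \cite{KvM}) that $MA_{\omega_1}$ for measure algebras is equivalent to the assertion that every measure algebra has \emph{precaliber} $\omega_1$, that is, every uncountable family of elements of positive measure admits an uncountable centered subfamily. With this in hand the equivalence (\ref{CorsonMeas}) $\Leftrightarrow$ (\ref{Precaliber}) splits into the two implications treated below.

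For (\ref{Precaliber}) $\Rightarrow$ (\ref{CorsonMeas}) I would argue by contradiction. Let $X \subseteq \Sigma(\mathbb{R}^\kappa)$ be a non-metrizable Corson compactum carrying a strictly positive measure $\mu$. Since $w(X)=d(X)>\omega$, the set of coordinates on which $X$ is non-trivial is uncountable (otherwise $X$ would embed in $\mathbb{R}^\omega$ and be metrizable), so after a pigeonhole on a rational threshold and a sign we obtain an uncountable set $I$ and an $\epsilon>0$ such that $U_\alpha:=\{x \in X: x_\alpha>\epsilon\}$ is a non-empty open set for each $\alpha \in I$. Each $U_\alpha$ has positive measure, so the classes $[U_\alpha]$ form an uncountable family of non-zero elements of the measure algebra of $\mu$. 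Precaliber $\omega_1$ yields an uncountable $J \subseteq I$ such that $\{[U_\alpha]:\alpha \in J\}$ is centered, which means that every finite intersection $\bigcap_{\alpha \in F} U_\alpha$ has positive measure and is in particular non-empty. Hence the closed sets $\{\overline{U_\alpha}:\alpha \in J\}$ have the finite intersection property, and by compactness there is $x \in \bigcap_{\alpha \in J} \overline{U_\alpha}$. Since $\overline{U_\alpha} \subseteq \{x:x_\alpha \geq \epsilon\}$, this $x$ satisfies $x_\alpha \geq \epsilon>0$ for every $\alpha \in J$, contradicting the fact that points of a $\Sigma$-product have countable support.

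For the converse I would prove the contrapositive by a direct construction. Assume $MA_{\omega_1}$ for measure algebras fails, and fix a measure algebra, realized as the measure algebra of a probability space $(\Omega,\mu)$, together with an uncountable family $\{A_\alpha:\alpha<\omega_1\}$ of measurable sets whose classes witness the failure of precaliber $\omega_1$; after a pigeonhole we may assume $\delta \leq \mu(A_\alpha)<1$ for a fixed $\delta>0$ and all $\alpha$, and that no uncountable subfamily is centered. Consider the measurable map $\Phi:\Omega \to 2^{\omega_1}$ given by $\Phi(\omega)(\alpha)=\mathbf{1}_{A_\alpha}(\omega)$, let $\nu=\Phi_*\mu$ be the induced Radon measure on $2^{\omega_1}$, and let $K=\operatorname{supp}(\nu)$. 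By construction $\nu$ restricts to a strictly positive measure on the compact space $K$, and $K$ is non-separable: if $\{x_n:n<\omega\}$ were dense, then choosing $\alpha$ outside the countable set $\bigcup_n \operatorname{supp}(x_n)$ with $0<\mu(A_\alpha)<1$, the non-empty open set $\{x \in K:x_\alpha=1\}$ would miss every $x_n$.

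The heart of the matter, and the step I expect to be the main obstacle, is to verify that $K$ is actually a Corson compactum, i.e. that $K \subseteq \Sigma(2^{\omega_1})$, and this is exactly where the failure of precaliber is used. Suppose some $x \in K$ had uncountable support $T$. For every finite $F \subseteq T$ the basic clopen neighbourhood $\{y: y(\alpha)=1 \textrm{ for all } \alpha \in F\}$ of $x$ has positive $\nu$-measure, and this measure equals $\mu(\bigcap_{\alpha \in F} A_\alpha)$; hence every finite subfamily of $\{[A_\alpha]:\alpha \in T\}$ has positive measure, so $T$ indexes an uncountable centered subfamily, contrary to our choice. Thus every point of $K$ has countable support, $K$ is a non-metrizable Corson compactum with a strictly positive measure, and (\ref{CorsonMeas}) fails. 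The delicate points to get right here are the Baire-measurability of $\Phi$ and the well-definedness of the topological support of $\nu$ on the non-metrizable space $2^{\omega_1}$ (so $\nu$ must be taken in its Radon extension), together with the observation that the neighbourhood argument genuinely forbids uncountable \emph{support} points, rather than merely excluding uncountable total intersections.
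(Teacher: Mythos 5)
Your proposal is correct, and while your treatment of (\ref{Precaliber}) $\Rightarrow$ (\ref{CorsonMeas}) is essentially the paper's argument (your use of the open sets $\{x \in X: x_\alpha > \epsilon\}$ is in fact slightly cleaner than the paper's closed sets $\{x \in X: |x(j)| \geq \epsilon\}$, since non-empty open sets have positive measure for free when the measure is strictly positive), your proof of the converse implication takes a genuinely different route. The paper first invokes Lemma 6 of \cite{TSep} to turn the failure of precaliber $\omega_1$ into a covering of $2^{\omega_1}$ by an increasing $\omega_1$-sequence of Haar null sets $N_\alpha$; it then recursively constructs compact sets $K_\alpha$ of positive measure, disjoint from the $N_\xi$ for $\xi \le \alpha$ and arranged so that non-empty finite intersections have positive measure; the example is the adequate compactum $X=\{A \subset \omega_1: \{K_\alpha: \alpha \in A\}\ \textrm{has the finite intersection property}\}$, which is Corson because the increasing null cover forces every such $A$ to be countable; finally, the strictly positive measure on $X$ is manufactured through Kelley's criterion (Theorem \ref{KelleyTheorem}), by splitting the basic open sets $B_p$ according to the value of $\mu\bigl(\bigcap_{\xi \in p^{-1}(1)} K_\xi\bigr)$. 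You instead take a witnessing family $\{A_\alpha: \alpha<\omega_1\}$ in a probability space, push the measure forward to $2^{\omega_1}$ along $\omega \mapsto (\mathbf{1}_{A_\alpha}(\omega))_{\alpha<\omega_1}$, and let $K$ be the support of the Radon extension of the image measure: strict positivity is then automatic, and the Corson property of $K$ is precisely the absence of uncountable centered subfamilies, as you observe. Your route is shorter and bypasses both the covering lemma and Kelley's criterion; its price is the measure-theoretic care you rightly flag (Baire measurability of $\Phi$, the Radon extension, and $\tau$-additivity, so that $\operatorname{supp}(\nu)$ is well defined and co-null), whereas the paper's argument is more combinatorial, produces an explicitly described compactum, and keeps Kelley's criterion at centre stage, which is the declared purpose of that section. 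Two harmless bookkeeping remarks on your version: distinctness of the classes $[A_\alpha]$ costs nothing (if uncountably many coincided, they would already form an uncountable centered subfamily, contradicting the choice of the witness), and your non-separability argument uses countable supports, hence logically belongs after the Corson verification rather than before it.
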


We will provide a proof of the above theorem which is substantially different from Kunen and van Mill's original argument, by using Kelley's criterion for the existence of strictly positive measures in compacta. To state Kelley's criterion we need to define the notion of \emph{intersection number} of a family of sets.

Let $\mathcal{F}$ be a family of subsets of $X$. Let $\vec{F}=(F_1, \dots F_n)$ be a finite sequence of not necessarily distinct elements of $\mathcal{F}$. Define $i(\vec{F})$ to be:

$$i(\vec{F})=\max \{ |J|: J \subset \{1, \dots n\}, \bigcap_{j \in J} F_j \neq \emptyset \}$$

\noindent Then define the \emph{intersection number $\mathcal{I}(\mathcal{F})$ of $\mathcal{F}$} to be:

$$I(\mathcal{F})=\inf \left \{\frac{i(\vec{F})}{|\vec{F}|}: \vec{F} \in \mathcal{F}^n, n < \omega \right \}$$

\noindent where $|\vec{F}|$ denotes the length of $\vec{F}$.

It is easy to see that if $\mu$ is a finitely additive probability measure on $X$ and $\mathcal{F}$ is a family of measurable subsets of $X$ such that for some $\epsilon> 0$, $\mu(F) \geq \epsilon$, for every $F \in \mathcal{F}$, then $I(\mathcal{F}) \geq \epsilon$. In \cite{Ke}, Kelley proved a sort of converse to that, providing a well-known criterion for the existence of a strictly positive measure on a compact space.

\begin{theorem} \label{KelleyTheorem}
(Kelley, \cite{Ke}) A compact space $X$ supports a strictly positive measure if and only if the family of all non-empty open subsets of $X$ can be split into countably many subfamilies with positive intersection number.
\end{theorem}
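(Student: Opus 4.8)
The plan is to prove both implications, the forward one being essentially the remark recorded just before the statement and the reverse one resting on a classical lemma of Kelley about finitely additive measures. For the forward direction, if $\mu$ is a strictly positive Radon probability measure I would put $\mathcal{F}_n = \{U : U \text{ nonempty open},\ \mu(U) \geq 1/n\}$; strict positivity guarantees that $\bigcup_n \mathcal{F}_n$ is the family of \emph{all} nonempty open sets, and the observation made immediately before the theorem gives $I(\mathcal{F}_n) \geq 1/n > 0$. The entire content is therefore in the reverse direction.

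So suppose the nonempty open sets are written as $\bigcup_{n<\omega} \mathcal{F}_n$ with $I(\mathcal{F}_n) = \epsilon_n > 0$. The central step is Kelley's lemma: \emph{if $\mathcal{F}$ is any family of subsets of $X$ with $I(\mathcal{F}) \geq \epsilon$, then there is a finitely additive probability measure $\lambda$ on $\mathcal{P}(X)$ with $\lambda(F) \geq \epsilon$ for every $F \in \mathcal{F}$.} I would first prove this for a finite subfamily $\mathcal{F}' = \{F_1, \dots, F_k\}$ by a minimax argument and then pass to the whole family by compactness. Let $\mathcal{M}$ denote the convex, weak$^*$-compact set of finitely additive probability measures on $\mathcal{P}(X)$, which contains every point mass $\delta_x$, and let $\Delta$ be the compact convex simplex of probability weights $p$ on $\{1, \dots, k\}$. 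Applying Sion's minimax theorem to the bilinear map $(\lambda, p) \mapsto \sum_i p_i \lambda(F_i)$ gives
\[
\max_{\lambda \in \mathcal{M}} \min_{i \leq k} \lambda(F_i) = \max_{\lambda \in \mathcal{M}} \min_{p \in \Delta} \sum_i p_i \lambda(F_i) = \min_{p \in \Delta} \max_{\lambda \in \mathcal{M}} \sum_i p_i \lambda(F_i) = \min_{p \in \Delta} \sup_{x \in X} \sum_i p_i \chi_{F_i}(x),
\]
the last equality holding because the supremum over $\mathcal{M}$ of an integral is attained at a point mass. For a rational weight $p = (c_i/m)$ the function $\sum_i p_i \chi_{F_i}$ equals $\frac{1}{m}\sum_{j \leq m} \chi_{F_{i_j}}$ for a suitable sequence $\vec{F}$, whose supremum is $i(\vec{F})/m \geq I(\mathcal{F}') \geq \epsilon$ by hypothesis; since $p \mapsto \sup_x \sum_i p_i \chi_{F_i}(x)$ is continuous (it is the maximum of the finitely many linear functions $p \mapsto \sum_{i \in J} p_i$ over the $J$ with $\bigcap_{i \in J} F_i \neq \emptyset$), density of the rationals yields $\min_{p \in \Delta} \geq \epsilon$. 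Thus every finite subfamily admits some $\lambda \in \mathcal{M}$ with $\lambda(F_i) \geq \epsilon$, and since the sets $\{\lambda \in \mathcal{M} : \lambda(F) \geq \epsilon\}$ are weak$^*$-closed with the finite intersection property, their total intersection is nonempty, proving the lemma.

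With the lemma in hand I would, for each $n$, pick a finitely additive probability measure $\lambda_n$ with $\lambda_n(F) \geq \epsilon_n$ for all $F \in \mathcal{F}_n$, and form $\lambda = \sum_n 2^{-n-1}\lambda_n$, a finitely additive probability measure satisfying $\lambda(U) > 0$ for every nonempty open $U$ (as $U \in \mathcal{F}_n$ for some $n$). Integration against $\lambda$ restricts to a positive linear functional $L$ on $C(X)$ with $L(1) = 1$, so the Riesz representation theorem produces a Radon probability measure $\nu$ with $\int f\, d\nu = \int f\, d\lambda$ for all $f \in C(X)$. To see that $\nu$ is strictly positive, fix a nonempty open $U$ and, using that $X$ is compact Hausdorff, choose via Urysohn's lemma an $f \in C(X)$ with $0 \leq f \leq 1$, $f = 0$ off $U$, and $f(x) = 1$ for some $x \in U$. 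Then $V = \{f > 1/2\}$ is a nonempty open subset of $U$ and $f \geq \tfrac12 \chi_V$, whence $\nu(U) \geq \int f\, d\nu = \int f\, d\lambda \geq \tfrac12 \lambda(V) > 0$.

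The main obstacle is Kelley's lemma itself: the passage from the purely combinatorial intersection-number hypothesis to an honest finitely additive measure is exactly where a separation/minimax principle (equivalently, Hahn--Banach) must enter, and the translation of the hypothesis into the statement that every convex combination of the $\chi_F$ has supremum at least $\epsilon$ is the crux. The remaining steps are comparatively routine: assembling the $\lambda_n$ into a single strictly positive finitely additive measure is immediate, and the only delicate point in the finitely-additive-to-Radon passage is verifying that strict positivity survives, which the Urysohn-function estimate above handles.
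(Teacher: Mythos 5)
Your proof is correct, but there is nothing in the paper to compare it against: the paper states this theorem purely as a citation to Kelley's 1959 article and never proves it, using it only as a black box in its new proof of the Kunen--van Mill theorem. What you have supplied is a correct, self-contained reconstruction of the classical argument. The forward direction (splitting the nonempty open sets into the level families $\mathcal{F}_n=\{U:\mu(U)\ge 1/n\}$, $n\ge 1$, and invoking the averaging observation recorded just before the theorem) is exactly the easy half. For the converse, your key lemma --- that $I(\mathcal{F})\ge\epsilon$ yields a finitely additive probability measure $\lambda$ with $\lambda(F)\ge\epsilon$ for all $F\in\mathcal{F}$ --- is precisely Kelley's duality, and your derivation of it is sound: Sion's minimax theorem applies to the bilinear, separately continuous map $(\lambda,p)\mapsto\sum_i p_i\lambda(F_i)$ on the compact convex sets $\mathcal{M}\times\Delta$; the identification $\max_{\lambda\in\mathcal{M}}\sum_i p_i\lambda(F_i)=\sup_x\sum_i p_i\chi_{F_i}(x)$ is justified in both directions (point masses give $\ge$, monotonicity of the finitely additive integral on simple functions gives $\le$); the rational-weights computation correctly reduces the right-hand side to the definition of the intersection number, with continuity in $p$ (a maximum of finitely many linear functions) closing the density gap; and the passage from finite subfamilies to all of $\mathcal{F}$ by weak$^*$ compactness of $\mathcal{M}$ uses $I(\mathcal{F}')\ge I(\mathcal{F})$ for $\mathcal{F}'\subseteq\mathcal{F}$, which holds since every sequence from $\mathcal{F}'$ is one from $\mathcal{F}$. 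This minimax route is a legitimate substitute for Kelley's original Hahn--Banach/linear-programming argument and is essentially equivalent to it. The final step --- converting the strictly positive finitely additive measure $\sum_n 2^{-n-1}\lambda_n$ into a Radon measure via Riesz representation, with the Urysohn-function estimate $\nu(U)\ge\int f\,d\lambda\ge\tfrac{1}{2}\lambda(V)>0$ ensuring strict positivity survives --- is genuinely needed here, since the paper's convention is that all measures are Radon, and you handle it correctly. Only trivial quibbles remain: index the $\mathcal{F}_n$ from $n\ge 1$ to avoid $1/0$, and record explicitly that $\mathcal{M}$ is nonempty (it contains point masses) and closed in $[0,1]^{\mathcal{P}(X)}$, hence compact.
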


Recall that a poset $\mathbb{P}$ has \emph{precaliber $\omega_1$} if every uncountable $X \subset \mathbb{P}$ contains an uncountable $Y \subset X$ which is centered in $\mathbb{P}$. The statement that every ccc poset has precaliber $\omega_1$ is a standard consequence of $MA_{\omega_1}$ and is in fact equivalent to it. Similarly, $MA_{\omega_1}$ for measure algebras is equivalent to the statement that every measure algebra has precaliber $\omega_1$. This fact will be key in our proof of Kunen and van Mill's theorem.

\begin{proof}[Proof of Theorem $\ref{KvMTheorem}$]

To prove that $(\ref{Precaliber})$ implies $(\ref{CorsonMeas})$, pick a Corson compactum $X \subset \Sigma(\mathbb{R}^I)$ supporting a strictly positive measure $\mu$. If $X$ is not metrizable then there is $\epsilon > 0$ such that the set

$$J=\{i \in I: (\exists x \in X)(|x(i)| \geq \epsilon) \}$$

\noindent is uncountable. For $j \in J$, let $K_j=\{x \in X: |x(j)| \geq \epsilon \}$. Then $\{K_j: j \in J \}$ is an uncountable family of elements of the measure algebra of $(X, \mu)$ without an uncountable centered subfamily.

Vice versa, assume that $\omega_1$ is not a precaliber of a measure algebra. It is well-known (see Lemma 6 of \cite{TSep}) that this implies that $2^{\omega_1}$ is covered by an increasing sequence $\{N_\alpha: \alpha < \omega_1 \}$ of Haar null sets. By induction on $\alpha < \omega_1$ we pick compact subsets $K_\alpha$ of $[0,1]$ such that:
    
    \begin{enumerate}
    \item $K_\alpha \cap N_\xi=\emptyset$, for all $\xi \leq \alpha$.
    \item $K_\alpha \cap \bigcap_{\xi \in F} K_\xi=\emptyset$, for every finite $F \subset \alpha$ such that $\mu(\bigcap_{\xi \in F} K_\xi)=0$.
    \item $\mu (K_\alpha) > 0$.
    \end{enumerate}

 Let $X$ be the collection of all $A \subset \omega_1$ such that $\{K_\alpha: \alpha \in A \}$ has the finite intersection property. Note that, since $\bigcup \{N_\alpha: \alpha < \omega_1\}$ is an increasing sequence of sets which covers $[0,1]$ and $[0,1]$ is compact, every such $A$ must be countable.
 
 Identifying $X$ with a subset of $2^{\omega_1}$ we observe that $X$ is closed and therefore it is a compact subspace of $2^{\omega_1}$. From the countability of every $A$ it follows that $X$ is actually a subspace of $\Sigma(2^{\omega_1})$ and hence it's a Corson compactum.

The standard neighbourhoods of $X$ have the form $B_p=\{A \in X: (\forall \alpha \in dom(p))(\alpha \in A \iff p(\alpha)=1)\}$ where $p$ is a finite partial function from $\omega_1$ to $2$. Let $\mathcal{C}_{\omega_1}$ be the set of all $p \in Fn(\omega_1,2)$ such that $B_p \neq \emptyset$. According to Theorem $\ref{KelleyTheorem}$, in order to show that $X$ supports a strictly positive measure it suffices to decompose $\{B_p: p \in \mathcal{C}_{\omega_1}\}$ into countably many families having positive intersection number. Since $\mathcal{C}_{\omega_1}$ is a $\sigma$-centered poset it suffices to show that, if $\mathcal{C} \subset \mathcal{C}_{\omega_1}$ is a centered subset then $\{B_p: p \in \mathcal{C}\}$ can be split into countably many subfamilies with positive intersection number.

For every $p \in \mathcal{C}$, let:

$$K_p=\bigcap \{ K_\xi: \xi \in p^{-1}(1)\}$$

Then $\mu(K_p)>0$, for all $p \in \mathcal{C}$ and, given $\mathcal{C}_0 \subset \mathcal{C}$, the intersection number of $\{B_p: p \in \mathcal{C}_0\}$ is equal to the intersection number of $\{K_p: p \in \mathcal{C}_0\}$. Thus, if for $n<\omega$, we let:

$$\mathcal{C}_n=\left \{p \in \mathcal{C}: \mu(K_p) \geq \frac{1}{n+1}\right \}$$

\noindent then the intersection number of $\{B_p: p \in \mathcal{C}_n \}$ is equal to the intersection number of $\{K_p: p \in \mathcal{C}_n \}$ and this is at least $\frac{1}{n+1}$.

\end{proof}

%\begin{theorem}
%Assume a Suslin Tree exists. Then there is a Corson compactum $X$ such that $X^\omega$ has no dense metrizable subspace.
%\end{theorem}

%\begin{proof}

%Let $T$ be a Suslin Tree and define:

%$$X=\{A \subset E: A \textrm{ is an antichain in } T \}$$

%View $X$ as a subspace of $2^T$ by identifying each antichain with its characteristic function. Since $T$ has height $\omega_1$, $X$ is not separable. Since each antichain in $T$ is countable $X$ is a Corson compactum and since each chain in $T$ is countable $X^n$ is ccc, for every $n<\omega$. Hence, by Theorem $\ref{mainthm}$, $X^\omega$ does not have a dense metrizable subspace.
%\end{proof}

\section{Corson compacta and singular cardinals}

A family $\mathcal{F}$ of countable sets is called \emph{locally countable} (see \cite{T}) if for every uncountable subfamily $\mathcal{G} \subset \mathcal{F}$, the set $\bigcup \mathcal{G}$ is uncountable. The existence of a locally countable family of countable subsets of $\aleph_\omega$ which has cardinality $>\aleph_\omega$ is independent of ZFC. It follows from $\largesquare_{\aleph_\omega}$ or, more generally, from the existence of a good PCF scale at $\aleph_\omega$, but is negated by Chang's Conjecture for $\aleph_\omega$ (see, for example, \cite{KMS} and \cite{T}).

\begin{theorem} \label{TodEx} 
Suppose that for some singular cardinal $\kappa$ of countable cofinality, the set $[\kappa]^\omega$ contains a locally countable family of cardinality $>\kappa$. Then there is a Corson compactum $X$ such that $c(X^\omega)=\kappa < d(X)$ (and hence $X^\omega$ has no dense metrizable subspace).
\end{theorem}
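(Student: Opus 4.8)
The plan is to apply Theorem \ref{mainthm}. Since $d(X)=\pi w(X)=w(X)$ for every Corson compactum, it suffices to produce a Corson compactum $X$ with $w(X)=|\mathcal{F}|>\kappa$ and $c(X^\omega)=\kappa$, where $\mathcal{F}$ is the given locally countable family; then $d(X)=w(X)\geq\kappa^+>\kappa=c(X^\omega)$, which is the displayed inequality. To see that this yields the parenthetical conclusion, note that once cellular families of sizes cofinal in $\kappa$ have been exhibited in $X^\omega$, the Erd\H{o}s--Tarski theorem (4.1 of \cite{Ju}) applies, because $\kappa$ is singular of countable cofinality: it produces a cellular family of size exactly $\kappa$, so $\hat{c}(X^\omega)=\kappa^+$. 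As $w(X)\geq\kappa^+=\hat{c}(X^\omega)$, Theorem \ref{mainthm} shows that $X^\omega$ has no dense metrizable subspace. Thus everything reduces to the construction of $X$ together with the two estimates $c(X^\omega)\leq\kappa$ and $c(X^\omega)\geq\kappa$.

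For the construction I would follow the template of Theorem \ref{cccequiv}, replacing the poset used there by the combinatorics of $\mathcal{F}$. Enumerate $\mathcal{F}=\{F_\xi:\xi<\lambda\}$ with $\lambda=|\mathcal{F}|$, and let $X$ be the space of characteristic functions of the \emph{independent} subsets of $\lambda$ for a suitable hereditary, finitely determined family $\mathcal{H}\subseteq[\lambda]^{<\omega}$ of forbidden configurations, defined from the intersection pattern of the $F_\xi$. Finite determinacy makes $X$ a closed, hence compact, subspace of $2^\lambda$; since all singletons are allowed, the clopen algebra is generated by the $\lambda$ coordinate sets, so $w(X)=\lambda>\kappa$. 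The whole point of the choice of $\mathcal{H}$ is to guarantee that $X$ has no uncountable independent set; equivalently, every point of $X$ has countable support, so $X\subseteq\Sigma(2^\lambda)$ is Corson and $d(X)=w(X)=\lambda$.

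The first delicate point is to design $\mathcal{H}$ so that local countability rules out uncountable independent sets. Here I would use the reformulation of local countability stating that for every countable $C\subseteq\kappa$ only countably many $\xi$ satisfy $F_\xi\subseteq C$, combined with an elementary submodel argument. Given a putative uncountable independent $A$, take a countable $M\prec H_\theta$ containing $\mathcal{F}$ and $A$, and any $\xi\in A\setminus M$; the reformulation guarantees $F_\xi\not\subseteq M\cap\kappa$, so $\xi$ contributes a point of $\kappa$ outside the countable trace $M\cap\kappa$. The family $\mathcal{H}$ is to be defined precisely so that such a ``fresh'' point, together with finitely many indices drawn from $A\cap M$, constitutes a forbidden configuration, contradicting independence. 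This is the step that uses the hypothesis in full strength.

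It then remains to estimate $c(X^\omega)$. For the lower bound I would exhibit, for each $n<\omega$, a cellular family of size $\kappa_n$, where $\kappa=\sup_n\kappa_n$, so that the supremum of the sizes of cellular families is at least $\kappa$. For the upper bound I would argue by contradiction: a cellular family of size $\kappa^+$ in $X^\omega$ consists of basic open sets, and after a $\Delta$-system reduction on the supports across all $\omega$ coordinates, the disjointness of two such sets is forced, in some coordinate, by the union of the corresponding finite index sets leaving $\mathcal{H}$-independence; tracking this through the $F_\xi$ attaches to each member a countable subset of $\kappa$ in such a way that disjointness of the open sets yields disjointness of these countable sets. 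Since $\kappa$ admits at most $\kappa$ pairwise disjoint countable subsets, this is a contradiction, so $c(X^\omega)\leq\kappa$ and hence $c(X^\omega)=\kappa$. The main obstacle, and the heart of the theorem, is to define $\mathcal{H}$ so that these two demands hold at once: local countability must forbid uncountable independent sets (Corson-ness), while the forbidden configurations stay sparse enough that no finite power of $X$ acquires a cellular family of size $\kappa^+$. This is the exact singular analogue of the interplay between ``no uncountable centered set'' and ``powerfully ccc'' in Theorem \ref{cccequiv}, and extracting both properties from a single locally countable family is where the combinatorial work lies.
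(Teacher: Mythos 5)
Your reduction to Theorem \ref{mainthm} is correct (and can be shortened: once $c(X^\omega)\leq\kappa$ is known, $\hat{c}(X^\omega)\leq\kappa^{+}\leq w(X)$ follows at once, so the Erd\H{o}s--Tarski step is needed only for the exact value of $c(X^\omega)$, not for ruling out a dense metrizable subspace), and your template --- a compactum of characteristic functions of ``independent'' subsets of $\mathcal{F}$, with Corson-ness extracted from local countability and a cellularity bound valid in all powers --- is the right shape, and is indeed the shape of the paper's argument. But there is a genuine gap at the center: the family $\mathcal{H}$ of forbidden configurations is never defined, and you yourself flag its definition as ``where the combinatorial work lies.'' That definition \emph{is} the theorem; what you have written is a reduction plus a promissory note. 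The paper's choice is short: assume each member of $\mathcal{F}$ has order type $\omega$, identify it with its increasing enumeration $a:\omega\to\kappa$, set $\Delta(a,b)=\min\{n:a(n)\neq b(n)\}$, and forbid exactly the triples $\{a,b,c\}$ on which $\Delta$ is constant. Corson-ness then follows by pigeonhole rather than by elementary submodels: if $A\in X$ were uncountable, local countability makes $\bigcup A$ uncountable, so there is a least $n$ with $\{a(n):a\in A\}$ uncountable; since $\{a\upharpoonright n:a\in A\}$ is countable, uncountably many $a\in A$ share the same restriction to $n$ while taking pairwise distinct values at $n$, and any three of these form a constant-$\Delta$ triple.

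The second gap is the upper bound $c(X^\omega)\leq\kappa$. Your sketch attaches countable subsets of $\kappa$ to the members of a putative cellular family so that disjointness of open sets becomes disjointness of the attached sets; in this kind of construction that correspondence is essentially backwards. Disjointness of two compatible basic sets $[s],[t]$ means that $s^{-1}(1)\cup t^{-1}(1)$ contains a forbidden triple, i.e.\ three enumerations that \emph{agree} below some level $n$ and pairwise split at $n$ --- it reflects coincidence, not disjointness, of initial segments (for instance, pairs of branches sharing the value $a(0)=0$ but splitting at level $1$ give disjoint basic sets whose supports all contain $0$), so the counting ``at most $\kappa$ pairwise disjoint countable subsets of $\kappa$'' has nothing to bite on. The paper's mechanism is linkedness: to $[s]$ associate the trace $s\upupharpoons(k_s+1)=\{(a\upharpoonright (k_s+1),s(a)):a\in dom(s)\}$, where $k_s=\max\{\Delta(a,b):a,b\in dom(s)\}$; two conditions with equal traces are compatible, so the canonical base of $X$ splits into $\kappa$ many linked families. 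This is the key point, because $\kappa$-linkedness, unlike a cellularity bound proved by a $\Delta$-system argument in a fixed power, is productive: matching indices coordinatewise gives a $\kappa$-linked base of every finite power, hence $c(X^\omega)=\sup_n c(X^n)\leq\kappa$ in one stroke. Without a concrete $\mathcal{H}$ and a compatibility criterion of this sort, neither of your two ``demands'' can be verified, so the proposal does not constitute a proof.
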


\begin{proof}
Let $\mathcal{F}$ be a locally countable family of countable subsets of $\kappa$ such that $|\mathcal{F}| > \kappa$. Without loss of generality we can assume that each element of $\mathcal{F}$ has order type $\omega$, so we can identify $a \in \mathcal{F}$ with its increasing enumeration $a: \omega \to \kappa$ and $\mathcal{F}$ with the corresponding subset of $\kappa^\omega$. Given $a, b \in \mathcal{F}$ we denote $\Delta(a,b)$ to be the least integer $n<\omega$ such that $a(n) \neq b(n)$. We identify each subset of $\mathcal{F}$ with its characteristic function and define our space $X$ to be the following subspace of $2^{\mathcal{F}}$:

$$X=\{A \subset \mathcal{F}: (\forall \{a,b,c\} \in [A]^3)(|\{\Delta(a,b), \Delta(b,c), \Delta(a,c)\}| > 1 \}$$ 

It's easy to see that $X$ is a closed subset of $2^{\mathcal{F}}$ and hence compact. From the fact that $\mathcal{F}$ is locally countable it follows that each element of $X$ is countable. Indeed, suppose by contradiction that $A$ is an uncountable element of $X$. Then by the properties of $\mathcal{F}$, $\bigcup A$ is uncountable, so there must be $n<\omega$ such that $\{a(n): a \in A \}$ is uncountable. Let $n$ be the least integer with respect to this property. Note that $\{a \upharpoonright n, a \in A\}$ is a countable set. By the pigeonhole principle there must be a set $B \subset A$ such that $\{a(n): a \in B\}$ is uncountable and $|\{a \upharpoonright n: a \in B\}|=1$. Picking $a,b,c \in B$ we see that $|\{\Delta(a,b), \Delta(b,c), \Delta(a,c)\}|=1$. It turns out that $A \notin X$.

Therefore $X$ is a Corson compactum.

To show that $c(X^\omega) \leq \kappa$, we prove the stronger fact that the standard base of $X$ is $\kappa$-linked. Given a partial function $s \in Fn(\mathcal{F},2)$ and $n<\omega$ we denote by $s \upupharpoons n$ the set $\{(a \upharpoonright n, s(a)): a \in dom(s)\}$ and $k_s=\max \{\Delta(a,b): a, b \in dom(s) \}$. Let $s,t \in Fn(\mathcal{F}, 2)$, and $n<\omega$ be an integer such that $n> \min \{k_s, k_t\}$; it is easy to see that if $s \upupharpoons n = t \upupharpoons n$ then $[s] \cap [t] \neq \emptyset$.

Therefore for every partial function $\sigma \in Fn(Fn(\kappa, \omega), \omega)$, the set $\mathcal{U}_{\sigma}=\{[s]: s \upupharpoons (k_s+1) = \sigma \}$ is linked. To finish note that the standard basis $\mathcal{U}$ of $X$ satisfies $\mathcal{U} = \bigcup \{\mathcal{U}_\sigma: \sigma \in Fn(Fn(\kappa, \omega), \omega)\}$.
\end{proof}

\begin{question}
Is there a ZFC example of a Corson compact space $X$ such that $c(X^\omega)<d(X)$?
\end{question}

If we are willing to relax a little bit the requirement that $X$ is a Corson compactum, we can obtain an example in ZFC. Recall that a family of sets $\mathcal{F}$ is called $(\mu, \kappa)$-\emph{sparse} (see \cite{KMS}) if for every subfamily $\mathcal{G} \subset \mathcal{F}$ such that $| \mathcal{G} | \geq \mu$ we have $|\bigcup \mathcal{G}| \geq \kappa$. So a locally countable family is just an $(\omega_1, \omega_1)$-sparse family. 

The following lemma is a consequence of Shelah's result that every PCF scale at $\aleph_\omega$ contains a club set of good points of cofinality $\geq \aleph_4$ (a direct proof of this lemma is given in \cite{KMS}).

\begin{lemma}
In ZFC there is an $(\aleph_4, \aleph_1)$-sparse family of cardinality $>\aleph_\omega$ in $[\aleph_\omega]^\omega$.
\end{lemma}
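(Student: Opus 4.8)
The plan is to build the sparse family directly from a Shelah PCF scale at $\aleph_\omega$ and to read off sparseness from the goodness of the scale points. First I would fix a scale $\langle f_\alpha : \alpha < \aleph_{\omega+1}\rangle$ in $\prod_{n<\omega}\aleph_n$, so that each $f_\alpha$ satisfies $f_\alpha(n)<\aleph_n$ and the sequence is $<^*$-increasing and $<^*$-cofinal. To each $\alpha$ I associate the graph $F_\alpha=\{(n,f_\alpha(n)) : n<\omega\}$, a countable subset of the ground set $W=\{(n,\beta):\beta<\aleph_n\}$, which has cardinality $\aleph_\omega$; identifying $W$ with $\aleph_\omega$ gives a family living in $[\aleph_\omega]^\omega$. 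Since the scale is strictly $<^*$-increasing, the functions, and hence their graphs, are pairwise distinct, so the family has cardinality $\aleph_{\omega+1}>\aleph_\omega$. I will invoke the quoted theorem of Shelah in the form: there is a club $D\subseteq\aleph_{\omega+1}$ such that every $\delta\in D$ with $\operatorname{cf}(\delta)\geq\aleph_4$ is a good point of the scale, and I then set $\mathcal F=\{F_\alpha:\alpha\in D\}$; restricting the index set to $D$ is essential, as explained at the end.

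The heart of the argument is the following local claim: if $\delta$ is a good point with $\operatorname{cf}(\delta)=\aleph_4$ and $A\subseteq\delta$ is cofinal in $\delta$, then $\bigcup_{\alpha\in A}F_\alpha$ is uncountable. To prove it I would use the standard characterization of goodness: $\langle f_\alpha:\alpha<\delta\rangle$ has an exact upper bound $f$ with $\operatorname{cf}(f(n))=\operatorname{cf}(\delta)=\aleph_4$ for all sufficiently large $n$ (note $\operatorname{cf}(f(n))=\aleph_4$ forces $f(n)\geq\aleph_4$, hence such $n$ satisfy $n>4$). Suppose toward a contradiction that $\bigcup_{\alpha\in A}F_\alpha$ is countable. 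Then for each large $n$ the set $\{f_\alpha(n):\alpha\in A\}$ is a countable subset of $f(n)$, hence bounded below $f(n)$ because $\operatorname{cf}(f(n))=\aleph_4>\aleph_0$. Choose $\xi_n$ with $\sup\{f_\alpha(n):\alpha\in A\}<\xi_n<f(n)$ (possible since that supremum lies below $f(n)$ and $f(n)$ has uncountable cofinality) and set $h(n)=\xi_n$. Then $f_\alpha<^* h$ for every $\alpha\in A$, while $h<^* f$; exactness of $f$ yields $\alpha<\delta$ with $h\leq^* f_\alpha$, and by cofinality of $A$ we may take $\alpha\in A$, contradicting $f_\alpha<^* h$. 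This proves the claim.

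With the claim in hand, sparseness is immediate. Let $\mathcal G\subseteq\mathcal F$ with $|\mathcal G|\geq\aleph_4$, say $\mathcal G=\{F_\alpha:\alpha\in A\}$ with $A\subseteq D$. I would choose $A_0\subseteq A$ of order type $\aleph_4$ and put $\lambda=\sup A_0$; since $\aleph_{\omega+1}$ is regular, $\lambda<\aleph_{\omega+1}$ and $\operatorname{cf}(\lambda)=\aleph_4$. As $A_0\subseteq D$ accumulates to $\lambda$ and $D$ is closed, $\lambda\in D$, so $\lambda$ is good by Shelah's theorem and $A_0$ is cofinal in $\lambda$. The local claim then gives that $\bigcup_{\alpha\in A_0}F_\alpha$, and hence $\bigcup\mathcal G$, is uncountable. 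Thus $\mathcal F$ is $(\aleph_4,\aleph_1)$-sparse of cardinality $\aleph_{\omega+1}>\aleph_\omega$, as required.

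I expect two points to require the most care. The first is purely a matter of correctly quoting Shelah's theorem together with the exact-upper-bound characterization of good points; these are deep inputs, but the excerpt permits using them as a black box. The second, more structural obstacle is the necessity of indexing by $D$ rather than by all of $\aleph_{\omega+1}$: an adversarial $A$ of size $\aleph_4$ could be trapped inside a gap of $D$ with $\sup A\notin D$, so its unique limit point of cofinality $\aleph_4$ need not be good and the local claim would fail to apply. Confining the family to $D$ forces the supremum of every $\aleph_4$-sized subfamily back into $D$ with cofinality $\aleph_4$, which is precisely what makes the goodness hypothesis available; this is also why the construction produces sparseness exactly at the threshold $\aleph_4$ and no lower, matching the cofinality level at which Shelah's theorem yields good points in ZFC.
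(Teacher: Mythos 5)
Your argument is correct and is essentially the derivation the paper has in mind: the paper gives no proof of its own, attributing the lemma to Shelah's theorem on a club of good points of cofinality $\geq\aleph_4$ in a PCF scale at $\aleph_\omega$ (deferring the details to \cite{KMS}), and your construction --- graphs of scale functions indexed by that club, with sparseness extracted from exact upper bounds at good points trapped back into the club --- is exactly that derivation. Two harmless imprecisions worth fixing: ZFC provides a scale of length $\aleph_{\omega+1}$ on $\prod_{n\in B}\aleph_n$ modulo finite for some infinite $B\subseteq\omega$, not necessarily on all of $\prod_{n<\omega}\aleph_n$; and since the exact upper bound $f$ bounds each $f_\alpha$ only modulo finite, the set $\{f_\alpha(n):\alpha\in A\}$ need not be contained in $f(n)$, so one should take $\xi_n=\sup\bigl(\{f_\alpha(n):\alpha\in A\}\cap f(n)\bigr)$, which still gives $f_\alpha<^*h$ for every $\alpha\in A$ and leaves the contradiction intact.
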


A space $X$ is called a $\tau$-\emph{Corson compactum} (see, for example, \cite{BM}) if $X$ is a compact subspace of $\Sigma_\tau(\mathbb{R}^\kappa)=\{x \in \mathbb{R}^\kappa: |supp(x)| \leq \tau\}$. Thus an $\aleph_0$-Corson compactum is simply a Corson compactum. The proof of the following theorem is essentially the same as the proof of Theorem $\ref{TodEx}$, using an $(\aleph_4, \aleph_1)$-sparse family in $[\aleph_\omega]^\omega$ instead of a locally countable family.

\begin{theorem}
In ZFC there is an $\aleph_3$-Corson compactum $X$ such that $c(X^\omega)<d(X)$.
\end{theorem}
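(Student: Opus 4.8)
The plan is to transcribe the construction of Theorem \ref{TodEx} essentially word for word, replacing the locally countable (i.e. $(\omega_1,\omega_1)$-sparse) family by the $(\aleph_4,\aleph_1)$-sparse family $\mathcal{F}\subseteq [\aleph_\omega]^\omega$ with $|\mathcal{F}|>\aleph_\omega$ furnished by the preceding lemma, and then to track how the weaker sparseness parameters change the conclusion. Set $\kappa=\aleph_\omega$, identify each $a\in\mathcal{F}$ with its increasing enumeration $a\colon\omega\to\kappa$, let $\Delta(a,b)$ be the first coordinate of disagreement, and put
$$X=\{A\subseteq \mathcal{F}:(\forall \{a,b,c\}\in [A]^3)\,(|\{\Delta(a,b),\Delta(b,c),\Delta(a,c)\}|>1)\},$$
a closed, hence compact, subspace of $2^{\mathcal{F}}$. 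It then remains to verify three things: that every $A\in X$ has size at most $\aleph_3$ (so $X$ is $\aleph_3$-Corson), that $c(X^\omega)\le\aleph_\omega$, and that $d(X)>\aleph_\omega$.

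First I would pin down the size bound on the members of $X$, which is the only place the two proofs genuinely diverge. I claim that if $A\in X$ then $\bigcup A$ is countable. The argument is verbatim the pigeonhole/triangle argument of Theorem \ref{TodEx}: assuming $\bigcup A$ uncountable, choose the least $n$ with $\{a(n):a\in A\}$ uncountable; then $\{a\upharpoonright n:a\in A\}$ is countable, so some fibre $B\subseteq A$ on which $a\upharpoonright n$ is constant still has $\{a(n):a\in B\}$ uncountable, and any three members of $B$ with distinct $n$-th values yield $\Delta(a,b)=\Delta(b,c)=\Delta(a,c)=n$, a forbidden triple. The key point is that this step uses only the uncountability of $\bigcup A$, never the size of $A$, so it survives the weakening of the sparseness parameters. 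Once $|\bigcup A|\le\aleph_0$ is known, the contrapositive of $(\aleph_4,\aleph_1)$-sparseness gives $|A|<\aleph_4$, i.e. $|A|\le\aleph_3$. Hence $X\subseteq \Sigma_{\aleph_3}(2^{\mathcal{F}})$ and $X$ is an $\aleph_3$-Corson compactum.

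Next I would carry the cellularity estimate over unchanged and then read off the density directly. The proof that the standard base of $X$ is $\kappa$-linked never used local countability: it rests only on the combinatorial fact that $s\upupharpoons n=t\upupharpoons n$ with $n>\min\{k_s,k_t\}$ forces $[s]\cap[t]\neq\emptyset$, together with the bookkeeping $\mathcal{U}=\bigcup_\sigma \mathcal{U}_\sigma$ over $\sigma\in Fn(Fn(\kappa,\omega),\omega)$, a set of cardinality $\kappa$; so the base is $\aleph_\omega$-linked. A $\kappa$-linked base for $X$ produces a $\kappa$-linked base for $X^\omega$ (group the basic boxes of $X^\omega$ by their finite support together with the choice of linked piece in each active coordinate: there are $\aleph_0\cdot\kappa=\kappa$ such groups and each is linked), whence $c(X^\omega)\le\aleph_\omega$. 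For the density I would avoid any appeal to a density--weight coincidence for $\aleph_3$-Corson compacta and argue by hand: for each $a\in\mathcal{F}$ the open set $U_a=\{A\in X:a\in A\}$ is nonempty since $\{a\}\in U_a$, so a dense $D\subseteq X$ meets every $U_a$; but each $A\in D$ belongs to $U_a$ for at most $|A|\le\aleph_3$ values of $a$, forcing $|\mathcal{F}|\le |D|\cdot\aleph_3$ and hence $|D|\ge|\mathcal{F}|>\aleph_\omega$ (using $|\mathcal{F}|>\aleph_3$). Thus $c(X^\omega)\le\aleph_\omega<|\mathcal{F}|=d(X)$.

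I expect the main obstacle to be precisely the first step: correctly reading off the $\aleph_3$ bound. One must confirm that the triangle argument manufactures a forbidden triple from nothing more than an uncountable $\bigcup A$, independently of $|A|$, so that the sole role of the sparseness hypothesis is its contrapositive $|\bigcup A|\le\aleph_0\Rightarrow|A|\le\aleph_3$. This is exactly the juncture at which weakening from $(\omega_1,\omega_1)$- to $(\aleph_4,\aleph_1)$-sparseness trades ``$X$ is Corson'' for ``$X$ is $\aleph_3$-Corson,'' while leaving the cellularity and density computations untouched.
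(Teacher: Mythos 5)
Your proposal is correct and takes essentially the same route as the paper, which proves this theorem simply by rerunning the construction of Theorem \ref{TodEx} with the $(\aleph_4,\aleph_1)$-sparse family in place of the locally countable one. Your analysis of exactly where the parameters enter is the intended adaptation: the triangle/pigeonhole argument uses only that $\bigcup A$ is uncountable, sparseness then caps $|A|$ at $\aleph_3$ (yielding $\aleph_3$-Corson rather than Corson), and the linkedness computation is untouched, while your hand-made density estimate via the sets $U_a$ is a prudent (indeed necessary) replacement for the equality $d=w$, which the paper quotes only for genuine Corson compacta and which can fail for $\tau$-Corson compacta.
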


\section{Concluding remarks}

In this final section we show that the usual examples of Corson compacta without a dense metrizable subspace cannot serve as counterexamples to Question $\ref{mainquest}$ and make various remarks about the existence of dense metrizable subspaces in their finite powers.

Recall that a space is $d$-separable if it has a $\sigma$-discrete dense subspace. Of course if a space has a dense metrizable subspace then it is $d$-separable, but the converse is not true, not even for Corson compacta. To get a counterexample, it suffices to consider the adequate version of Todorcevic's Corson compactum from \cite{Le} (Example 1).

\begin{theorem}
Let $X$ be Todorcevic's Corson compactum. Then:

\begin{enumerate}
\item \label{finpow} $X^n$ is not $d$-separable, for every $n<\omega$.
\item \label{countpow} $X^\omega$ has a dense metrizable subspace.
\end{enumerate}
\end{theorem}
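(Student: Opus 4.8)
The plan is to understand Todorcevic's Corson compactum concretely as an adequate compact space built from a combinatorial structure on $\omega_1$, and then exploit the strong asymmetry between finite and countable powers that the theorem asserts. Recall that an \emph{adequate} family $\mathcal{A}$ on a set $S$ is a family of subsets, closed under subsets, such that $A \in \mathcal{A}$ whenever every finite subset of $A$ lies in $\mathcal{A}$; the associated adequate compactum is $\{\chi_A : A \in \mathcal{A}\} \subset 2^S$, and it is Corson precisely when every member of $\mathcal{A}$ is countable. Todorcevic's example (Example 1 of \cite{Le}) arises from a partition or coloring of $[\omega_1]^2$, so that the admissible sets $A$ are those on which the coloring is suitably homogeneous. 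First I would fix this concrete description and record the combinatorial property that makes $X$ fail to have a dense metrizable subspace, since that property is exactly what I will need to run against $d$-separability of the finite powers.

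For item (\ref{finpow}), the strategy is to show that no $\sigma$-discrete subset of $X^n$ can be dense. Since $X^n$ is again (homeomorphic to) an adequate-type Corson compactum over a finite disjoint union of copies of $\omega_1$, a $\sigma$-discrete dense set would, by a pressing-down or $\Delta$-system argument on the supports, yield an uncountable homogeneous configuration in the underlying coloring — contradicting the defining property of Todorcevic's construction. Concretely, I would assume $D = \bigcup_{k<\omega} D_k$ is dense with each $D_k$ discrete, pass to an uncountable subfamily of points whose supports form a $\Delta$-system with fixed root, and then use the discreteness of some single $D_k$ together with the coloring to produce two points that cannot be separated, violating discreteness. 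The finite power only multiplies the number of coordinates by a fixed constant, so the same combinatorial obstruction survives; this is the step I expect to require the most care, because one must track how discreteness in the product interacts with the several independent copies of the base coloring.

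For item (\ref{countpow}), I would invoke Theorem \ref{mainthm}: since $X$ is a Corson compactum, $X^\omega$ has a dense metrizable subspace if and only if $\hat{c}(X^\omega) > w(X)$. As $w(X) = \aleph_1$, it suffices to produce a cellular family in $X^\omega$ of size $\aleph_1$, i.e. to show $\hat c(X^\omega) > \omega_1$, equivalently that the standard base of $X$ is $\sigma$-$\aleph_1$-linked so that by the argument of Theorem \ref{TodEx} the relevant cellular families of size $\aleph_1$ exist. In fact the cleanest route is to observe that the same $\sigma$-linked decomposition of the standard base used in the proof of Theorem \ref{TodEx} applies here: the basic open sets of $X$ split into $\aleph_1$-linked pieces indexed by finite traces, which forces $\hat c(X^\omega) = \aleph_1^+ > w(X)$, and Theorem \ref{mainthm} then delivers the dense metrizable subspace. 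The contrast with (\ref{finpow}) is exactly the phenomenon that motivates the paper: $d$-separability can fail in every finite power while the countable power is well-behaved.

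The main obstacle will be the argument for (\ref{finpow}): verifying that the obstruction to a $\sigma$-discrete dense set, which is essentially an uncountable-coloring argument in $X$ itself, does not get destroyed when passing to $X^n$ and that discreteness (a strictly weaker hypothesis than disjointness of open sets) is enough to drive the contradiction. I would handle this by reducing to a single discrete piece $D_k$ via Baire category, and then arguing that an uncountable discrete subset of $X^n$ with $\Delta$-system supports still encodes an uncountable monochromatic set for Todorcevic's coloring.
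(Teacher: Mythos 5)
Your proposal has genuine gaps, and the central one is in item (\ref{countpow}), where your cellularity machinery runs backwards. A decomposition of the standard base into ($\aleph_1$ many) linked pieces, as in the proof of Theorem \ref{TodEx}, gives an \emph{upper} bound on $c(X^\omega)$; it can never ``force'' the existence of a large cellular family, which is what Theorem \ref{mainthm} demands (namely $\hat{c}(X^\omega) > w(X)$, i.e.\ an actual cellular family in $X^\omega$ of size $w(X)$). Indeed, in Theorem \ref{TodEx} the linked decomposition is precisely the tool used to conclude that $X^\omega$ has \emph{no} dense metrizable subspace; so invoking ``the same argument'' for Todorcevic's compactum would, if it applied, prove the opposite of what you want. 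What the paper actually uses is a property of the space itself: $\hat{c}(X) > w(X)$, that is, Todorcevic's compactum already contains a cellular family of cardinality equal to its weight, and then Theorem \ref{mainthm} (via Lemma \ref{LemMain}) delivers the dense metrizable subspace of $X^\omega$. Compounding this, you misidentify the space: the adequate version of Todorcevic's compactum (Example 1 of \cite{Le}) is built over the reals equipped with two orderings and has $w(X)=2^{\aleph_0}$, not $\aleph_1$; it does not arise from a coloring of $[\omega_1]^2$. So even granting your (unjustified) claim $\hat{c}(X^\omega)=\aleph_1^+$, the inequality $\hat{c}(X^\omega) > w(X)$ would fail outside of CH, and the theorem is meant to hold in ZFC.

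For item (\ref{finpow}) the paper simply cites \cite{SSS}, where the failure of $d$-separability of all finite powers is proved; your $\Delta$-system/pressing-down sketch is premised on the same misidentification of the space (supports here are countable subsets of a set of size continuum, so pressing-down on $\omega_1$ is not available), and it stops well short of a proof: the passage from an uncountable discrete (not cellular) family with $\Delta$-system supports to a ``monochromatic'' configuration is exactly the hard combinatorial content, and it is the subject of a separate paper rather than a few lines. In short: item (\ref{finpow}) needs either the citation or a genuinely detailed argument, and item (\ref{countpow}) needs the correct key fact $\hat{c}(X) > w(X) = 2^{\aleph_0}$ in place of the linked-decomposition reasoning, which proves the wrong inequality.
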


\begin{proof}
Item $\ref{finpow}$ was proved in \cite{SSS}. Item $\ref{countpow}$ follows from Theorem $\ref{mainthm}$ since $\hat{c}(X) > w(X)=2^{\aleph_0}$.
\end{proof}

Recall that a Suslin subtree $T$ of $\omega^{<\omega_1}$ is called \emph{coherent} if for every $s, t \in T$, $\{\beta \in dom{(s)} \cap dom{(t)}: s(\beta) \neq t(\beta) \}$ is finite. Let $\mathbb{K}_T$ be the Suslin continuum corresponding to a coherent Suslin tree $T$. In \cite{THand} it is shown that, while $\mathbb{K}_T$ has no dense metrizable subspace, its square $\mathbb{K}_T^2$ has a dense metrizable subspace. Using Shapirovskii's Theorem we can then map $\mathbb{K}_T$ irreducibly onto a compact subspace $X$ of some $\Sigma$-product of lines. Thus we have the following:

\begin{theorem}
If there is a coherent Suslin tree then there is a perfectly normal Corson compactum $X$ such that $X$ has no dense metrizable subspace but $X^2$ does.
\end{theorem}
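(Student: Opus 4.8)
The plan is to transfer the two relevant properties of $\mathbb{K}_T$ to its Corson image $X$ through the common bridge of $\sigma$-disjoint $\pi$-bases. As noted just before the statement, Shapirovskii's theorem \cite{Sh} yields an irreducible continuous surjection $f \colon \mathbb{K}_T \to X$ onto a Corson compactum $X$; this applies because $\mathbb{K}_T$, being perfectly normal and compact, has all points $G_\delta$ and is therefore first countable, hence of countable tightness. First countability of $\mathbb{K}_T$ — and hence of $\mathbb{K}_T^2$ — is exactly what lets me invoke White's theorem \cite{Wh}: for first countable spaces, having a dense metrizable subspace is equivalent to having a $\sigma$-disjoint $\pi$-base. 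Thus the facts recalled from \cite{THand} translate to: $\mathbb{K}_T$ has no $\sigma$-disjoint $\pi$-base, while $\mathbb{K}_T^2$ has one. On the other side, $X$ and $X^2$ are Corson compacta, so by Lemma \ref{LemArk} a dense metrizable subspace is again equivalent to a $\sigma$-disjoint $\pi$-base. So everything reduces to showing that irreducible maps transport $\sigma$-disjoint $\pi$-bases in both directions.

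The key lemma I would isolate is: if $g \colon K \to L$ is an irreducible continuous surjection of compacta, then $K$ has a $\sigma$-disjoint $\pi$-base if and only if $L$ does. For a nonempty open $U \subseteq K$ put $U^* = L \setminus g(K \setminus U)$; this is open (as $g(K\setminus U)$ is compact, hence closed), nonempty by irreducibility, and satisfies $g^{-1}(U^*) \subseteq U$. Given a $\sigma$-disjoint $\pi$-base $\bigcup_n \mathcal{B}_n$ of $K$, the family $\bigcup_n \{ B^* : B \in \mathcal{B}_n\}$ is a $\pi$-base of $L$: for nonempty open $W \subseteq L$, pick $B \subseteq g^{-1}(W)$ and check $B^* \subseteq W$ using surjectivity. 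The one point needing care is that $\{B^* : B \in \mathcal{B}_n\}$ stays disjoint — this follows because any point of $B^* \cap C^*$ would have a $g$-preimage lying in $g^{-1}(B^*)\cap g^{-1}(C^*)\subseteq B \cap C$. The reverse direction is even easier: pulling a $\sigma$-disjoint $\pi$-base of $L$ back along $g$ gives one for $K$, using $g^{-1}(W^*)\subseteq W$ and that preimages of disjoint sets are disjoint.

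Granting the lemma, the theorem assembles quickly. Applying it to $f \colon \mathbb{K}_T \to X$: since $\mathbb{K}_T$ has no $\sigma$-disjoint $\pi$-base, neither does $X$, so by Lemma \ref{LemArk} the Corson compactum $X$ has no dense metrizable subspace. For the square, I would first observe that $f \times f \colon \mathbb{K}_T^2 \to X^2$ is again irreducible: for a nonempty open box $U \times V$, irreducibility of $f$ gives points $x,y$ with $f^{-1}(x)\subseteq U$ and $f^{-1}(y)\subseteq V$, whence $(f\times f)^{-1}(x,y) \subseteq U \times V$. Since $\mathbb{K}_T^2$ has a $\sigma$-disjoint $\pi$-base, the lemma produces one for $X^2$, and as $X^2$ is a Corson compactum, Lemma \ref{LemArk} delivers a dense metrizable subspace of $X^2$.

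It remains to secure perfect normality of $X$. The Suslin continuum $\mathbb{K}_T$ is hereditarily Lindel\"of (a standard property of Suslin lines and their completions); hereditary Lindel\"ofness is preserved by continuous surjections, so $X$ is hereditarily Lindel\"of, and a compact (hence regular) hereditarily Lindel\"of space is perfectly normal. The main obstacle I anticipate is the disjointness-preservation inside the key lemma: the operation $U \mapsto U^*$ is not a lattice homomorphism on open sets, so disjointness of the $B^*$ must be argued through common preimages rather than formally. The only other thing to get exactly right is bookkeeping — which direction of White's theorem and of the lemma is invoked for $X$ (the "no $\sigma$-disjoint $\pi$-base" direction) versus for $X^2$ (the "has a $\sigma$-disjoint $\pi$-base" direction).
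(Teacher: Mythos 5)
Your proposal is correct and follows essentially the same route as the paper: cite the facts from \cite{THand} that $\mathbb{K}_T$ has no dense metrizable subspace while $\mathbb{K}_T^2$ does, map $\mathbb{K}_T$ irreducibly onto a Corson compactum $X$ via Shapirovskii's theorem, and transfer both properties through the irreducible map (and its square) using White's theorem and Lemma \ref{LemArk}. The paper leaves the transfer argument implicit; your key lemma on irreducible maps carrying $\sigma$-disjoint $\pi$-bases in both directions, the irreducibility of $f\times f$, and the hereditary-Lindel\"of argument for perfect normality are exactly the details needed, and they are all verified correctly.
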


To achieve a higher control of the existence of dense metrizable subspaces in finite powers of Suslin continua, we need the notion of a \emph{full Suslin tree} from \cite{THand}. 

Given a tree $(T, \leq_T)$, for every $t \in T$, we denote by $T^t$ the set $\{s \in T: t \leq_T s \}$. 

Recall that if $\{T_i: i=1, \dots n \}$ is a sequence of trees then $\bigotimes_{i=1}^n T_i$ denotes the set of all sequences $(t_1, \dots t_n)$ such that $t_i \in T_i$ for every $i \in \{1, \dots n\}$ and $t_1, \dots t_n$ have the same height. Equip $\bigotimes_{i=1}^n T_i$ with the following partial order: $(t_1, \dots t_n) \leq (t'_1, \dots t'_n)$ if and only if $t_i \leq_{T_i} t'_i$, for every $i \in \{1, \dots n\}$. Then $\bigotimes_{i=1}^n T_i$ is a tree.

For a positive integer $n$ we say that a Suslin tree $T$ is \emph{$n$-full} if $\bigotimes_{i=1}^n T^{t_i}$ is Suslin, for every sequence $t_1, \dots t_n$ of distinct elements of $T$ having the same height. We say that a Suslin tree $T$ is \emph{full} if it is $n$-full, for every $1 \leq n < \omega$. Needless to say, coherent and full Suslin trees exist consistently. For example, they were constructed first by Jensen using $\Diamond$ (see Section 6 of \cite{THand}). 

\begin{theorem}
If there is a full Suslin tree than there is a perfectly normal Corson compactum $X$ such that:

\begin{enumerate}
\item \label{nodense} $X^n$ has no dense metrizable subspace, for every $n<\omega$.
\item \label{yesdense} $X^\omega$ has a dense metrizable subspace.
\end{enumerate}
\end{theorem}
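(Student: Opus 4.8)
The plan is to combine the full Suslin tree machinery with the characterization in Theorem~\ref{mainthm}. The starting point is the Suslin continuum $\mathbb{K}_T$ associated to a full Suslin tree $T$, and the key structural facts are: (a) for distinct $t_1,\dots,t_n$ of the same height, the tensor product $\bigotimes_{i=1}^n T^{t_i}$ is Suslin, which governs the cellularity of finite powers; and (b) fullness in all finite dimensions controls all the powers simultaneously. As in the statement preceding the theorem, one first applies Shapirovskii's Theorem to map $\mathbb{K}_T$ irreducibly onto a compact subspace $X$ of a $\Sigma$-product of lines, so that $X$ is a Corson compactum with $d(X)=d(\mathbb{K}_T)$ and $X$ is perfectly normal (perfect normality and density being preserved well enough under irreducible maps from the perfectly normal hereditarily Lindel\"of $\mathbb{K}_T$). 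I would record that, because the construction is built from a Suslin tree, $w(X)=d(X)=\aleph_1$.

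\emph{For item~(\ref{nodense})}, I would argue that each finite power $X^n$ fails to have a $\sigma$-disjoint $\pi$-base, which by Lemma~\ref{LemArk} is equivalent to the absence of a dense metrizable subspace. The mechanism is that the $n$-fullness of $T$ forces $X^n$ to be ccc: a cellular family in $X^n$ would, via the correspondence between basic open sets and nodes of the tensor product $\bigotimes_{i=1}^n T^{t_i}$, produce an uncountable antichain in a Suslin tree, a contradiction. Since $X^n$ is then a non-separable (equivalently non-metrizable, as $d(X^n)=\aleph_1$) ccc Corson compactum, it has no $\sigma$-disjoint $\pi$-base, for such a $\pi$-base would split an uncountable $\pi$-base into countably many cellular families, at least one of which would have to be uncountable, violating the ccc.

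\emph{For item~(\ref{yesdense})}, the strategy is to show $\hat{c}(X^\omega) > w(X)=\aleph_1$ and invoke Theorem~\ref{mainthm}. Here I would exploit that, while \emph{every} finite power of $X$ is ccc (giving $c(X^\omega)=\aleph_0$ by the standard fact that a product is ccc iff all finite subproducts are), the countable power $X^\omega$ is \emph{not} ccc: one can construct an uncountable cellular family in $X^\omega$ using infinitely many coordinates, since fullness only controls the finite tensor products, not the full infinite product. Thus $\hat{c}(X^\omega)=\aleph_1 > \aleph_0$ is too weak; instead I must verify the strict inequality $\hat{c}(X^\omega) > \aleph_1$, i.e.\ that $X^\omega$ has \emph{no} cellular family of size $\aleph_1$. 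This is exactly where the two halves interact delicately, and it is the main obstacle: I need that every cellular family in $X^\omega$ is countable, which would contradict the non-ccc claim. The resolution must be that the correct reading is $\hat{c}(X^\omega) > w(X)$ via a counting/cofinality argument rather than literal ccc-ness of $X^\omega$; concretely, I would show that any cellular family in $X^\omega$ lives on finitely many coordinates up to a $\Delta$-system refinement, each finite-coordinate piece being ccc by fullness, so no cellular family reaches size $\aleph_1$, yielding $\hat{c}(X^\omega) \geq \aleph_2 > w(X)$.

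\emph{The hard part} will be making this last coordinate-reduction rigorous: unlike the finite-power case, a single open set in $X^\omega$ can involve an arbitrary finite support, and I must show via a $\Delta$-system argument that an uncountable cellular family refines to one with a fixed finite root-support on which the $n$-fullness of $T$ applies, thereby forcing countability and establishing $\hat{c}(X^\omega) > w(X)$. Once that estimate is in hand, item~(\ref{yesdense}) is immediate from Theorem~\ref{mainthm}, and the whole theorem follows.
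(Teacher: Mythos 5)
Your proposal contains two fatal errors, one definitional and one factual, and together they invert the logic of both halves of the theorem.

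First, you have misread the cardinal function $\hat{c}$. By the paper's definition, $\hat{c}(Y)$ is the \emph{least} cardinal $\kappa$ such that $Y$ has \emph{no} cellular family of cardinality $\kappa$; hence the hypothesis $\hat{c}(X^\omega) > w(X) = \omega_1$ in Theorem \ref{mainthm} asserts that $X^\omega$ \emph{does} have a cellular family of size $\omega_1$. You read it the other way: you set out to prove that $X^\omega$ has no cellular family of size $\aleph_1$ and claim this would yield $\hat{c}(X^\omega) \geq \aleph_2$. Had you succeeded, you would have shown $\hat{c}(X^\omega) \leq \aleph_1 = w(X)$, which by Theorem \ref{mainthm} proves that $X^\omega$ has \emph{no} dense metrizable subspace --- the negation of item (\ref{yesdense}). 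So the entire strategy for item (\ref{yesdense}) argues toward the opposite of what is needed.

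Second, your mechanism for item (\ref{nodense}) --- that $n$-fullness forces $X^n$ to be ccc --- is false. By a classical observation going back to Kurepa, the square of \emph{any} Suslin continuum fails the ccc: using nowhere separability of $\mathbb{K}_T$ one recursively chooses intervals $(a_\alpha,b_\alpha)$ with interior points $c_\alpha$, each avoiding the closure of the set of previously chosen points, and the rectangles $(a_\alpha,c_\alpha)\times(c_\alpha,b_\alpha)$ form an uncountable cellular family in $\mathbb{K}_T^2$. Fullness concerns only the tensor products $\bigotimes_{i=1}^n T^{t_i}$ over \emph{distinct} nodes $t_i$ of the same height, and cannot exclude this near-diagonal cellular family. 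Since irreducible maps preserve cellularity, $X^2$ is not ccc either, so your ccc argument collapses for every $n \geq 2$, as does your assertion in item (\ref{yesdense}) that every finite power of $X$ is ccc. The irony is that the non-ccc-ness of the square is precisely what the paper \emph{uses} for item (\ref{yesdense}): $\hat{c}(\mathbb{K}_T^2) > \omega_1 = \pi w(\mathbb{K}_T^2)$, and since irreducible maps preserve cellularity and $\pi$-weight, $\hat{c}(X^\omega) \geq \hat{c}(X^2) > \omega_1 = w(X)$, whence Theorem \ref{mainthm} applies. Item (\ref{nodense}) is not proved via chain conditions at all: the paper invokes the argument of Lemma 6.7 of \cite{THand}, in which fullness is used directly to rule out $\sigma$-disjoint $\pi$-bases (equivalently, by Lemma \ref{LemArk}, dense metrizable subspaces) in the finite powers. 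In short, fullness is needed only for item (\ref{nodense}); item (\ref{yesdense}) holds for the image of any Suslin continuum under an irreducible map onto a Corson compactum.
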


\begin{proof}
Let $T$ be a full Suslin tree and $\mathbb{K}_T$ be the corresponding Suslin continuum. By Shapirovskii's Theorem there is an irreducible map $f$ from $\mathbb{K}_T$ onto a Corson compactum $X$. Item $\ref{nodense}$ is proved with the same argument proving Lemma 6.7 of \cite{THand}. As for item $\ref{yesdense}$, note that $\hat{c}(\mathbb{K}_T^2)> \omega_1=\pi w(\mathbb{K}_T^2)$ and therefore, since irreducible map preserve cellularity and $\pi$-weight $\hat{c}(X^2)> \omega_1= \pi w(X^2)=w(X)$, therefore the result follows from Theorem $\ref{mainthm}$.
\end{proof}

For a positive integer $n$, we say that a tree $T$ is \emph{$n$-coherent} if $T$ can be represented as a downwards-closed subtree of $\omega^{<\omega_1}$ such that, for all $t_1, \dots t_{n+1}$ having the same height $\alpha$, the set:

$$\{\xi < \alpha: t_i(\xi) \neq t_j(\xi)\}$$

\noindent is finite, for all distinct $i, j \in \{1, \dots, n+1 \}$. Thus $1$-coherent is the same as coherent.  The constructions of full and coherent Suslin trees presented in section 6 of \cite{THand} also give us an $n$-full, $n$-coherent Suslin tree, for every $1 \leq n < \omega$. Therefore we have the following theorem.

\begin{theorem}
Let $n$ be a positive integer and let us assume the existence of an $n$-full, $n$-coherent Suslin tree. Then there is a perfectly normal Corson compactum $X$ such that:

\begin{enumerate}
\item $X^n$ has no dense metrizable subspace.
\item $X^{n+1}$ has a dense metrizable subspace.
\end{enumerate}
\end{theorem}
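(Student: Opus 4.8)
The plan is to follow the pattern established by the preceding full-Suslin-tree theorem, but with the finiteness assumptions calibrated to the integer $n$. Let $T$ be an $n$-full, $n$-coherent Suslin tree and let $\mathbb{K}_T$ be the associated Suslin continuum. By Shapirovskii's Theorem, fix an irreducible map $f$ from $\mathbb{K}_T$ onto a compact subspace $X$ of a $\Sigma$-product of lines; since $\mathbb{K}_T$ is perfectly normal and irreducible preimages preserve perfect normality, $X$ is a perfectly normal Corson compactum. The whole argument then reduces to computing the two relevant cardinal functions of the finite powers of $X$ and feeding them into Theorem \ref{mainthm}, using that irreducible maps preserve both cellularity and $\pi$-weight (so the computations may be carried out on $\mathbb{K}_T^k$ and transferred to $X^k$).

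For the negative part, I would show that $X^n$ has no dense metrizable subspace by establishing $\hat{c}(X^n) \leq \omega_1 = \pi w(X^n) = w(X)$, i.e.\ that $X^n$ fails to have a cellular family of size $\omega_1$ while having weight $\omega_1$; Theorem \ref{mainthm} (applied to the relevant power, reading off that $\hat{c}(X^n) > w(X)$ is exactly the condition for a dense metrizable subspace in $X^n$) then gives the conclusion. Concretely, the $n$-fullness of $T$ means $\bigotimes_{i=1}^{n} T^{t_i}$ is Suslin for distinct same-height $t_1,\dots,t_n$, which is what forces $\mathbb{K}_T^n$ to remain ccc, hence $\hat{c}(\mathbb{K}_T^n) \leq \omega_1$; this is the direct generalization of the step ``$\mathbb{K}_T^2$ is ccc'' from the coherent case, and it is essentially Lemma 6.7 of \cite{THand} with $2$ replaced by $n$.

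For the positive part, I would show $X^{n+1}$ has a dense metrizable subspace by verifying $\hat{c}(X^{n+1}) > \omega_1 = \pi w(X^{n+1}) = w(X)$. The point is that while the $n$-fold tensor power stays Suslin, the $(n+1)$-fold power of the Suslin continuum carries a cellular family of size $\omega_1$: $n$-coherence guarantees that any $n+1$ distinct same-height nodes pairwise differ on only finitely many coordinates, and this coherence is precisely what allows one to manufacture $\omega_1$-many pairwise disjoint basic open sets in $\mathbb{K}_T^{n+1}$, exactly as coherence produced a large cellular family in $\mathbb{K}_T^2$ in the $1$-coherent case. Thus $\hat{c}(\mathbb{K}_T^{n+1}) > \omega_1$, which transfers to $\hat{c}(X^{n+1}) > \omega_1$, and Theorem \ref{mainthm} delivers the dense metrizable subspace.

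The main obstacle I anticipate is the combinatorial core of the negative part: proving that $n$-fullness really yields $\hat{c}(\mathbb{K}_T^n) \leq \omega_1$, i.e.\ that no $\omega_1$-sized cellular family can appear one power too early. This requires a careful $\Delta$-system reduction on the supports of basic open sets in $\mathbb{K}_T^n$, followed by an appeal to the Suslinity of the tensor products $\bigotimes_{i=1}^n T^{t_i}$ to extract an uncountable antichain in a single such product from a putative uncountable cellular family --- the genuine content of Lemma 6.7 of \cite{THand}. Since the statement asserts the argument is the ``same'' as in that lemma, I would invoke it by reference rather than reproduce the reduction, and I expect the positive direction to be comparatively routine once the coherence-to-cellularity construction is recalled.
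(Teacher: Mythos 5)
Both halves of your plan run through Theorem \ref{mainthm} ``applied to the relevant power,'' and this is where the proposal breaks down: that theorem is an equivalence for the \emph{countable} power $X^\omega$ only, and its sufficiency half (Lemma \ref{LemMain}) genuinely requires infinitely many coordinates, since it partitions the index set into countably many pieces each with infinite complement. Moreover, both of your key intermediate claims are refuted by the paper itself. For the negative part, your claim that $n$-fullness forces $\mathbb{K}_T^n$ to be ccc is false for every $n \geq 2$: the square of \emph{any} Suslin continuum contains $\omega_1$ pairwise disjoint open boxes (the classical near-diagonal rectangles $(x_\alpha,y_\alpha)\times(y_\alpha,z_\alpha)$), and indeed the paper's proof of the full-tree theorem explicitly records $\hat{c}(\mathbb{K}_T^2)>\omega_1$. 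Fullness only controls rectangles lying over products of cones $T^{t_1}\otimes\dots\otimes T^{t_n}$ above \emph{distinct} same-height nodes; it cannot kill the near-diagonal cellular families. Consequently Lemma 6.7 of \cite{THand} is not a ccc statement: it shows \emph{directly} that the finite powers admit no $\sigma$-disjoint $\pi$-base (equivalently, by Lemma \ref{LemArk}, no dense metrizable subspace), and that is the form in which you must invoke it; your intermediate target $\hat{c}(X^n)\leq\omega_1$ is both false and unnecessary.

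For the positive part the gap is fatal rather than cosmetic. The criterion you propose --- $\hat{c}(X^{n+1})>\omega_1=w(X)$ implies a dense metrizable subspace of $X^{n+1}$ --- is false for finite powers, and the paper's full-Suslin-tree theorem is a counterexample: there $\hat{c}(X^2)>\omega_1=w(X)$, yet no finite power of $X$ has a dense metrizable subspace. Note also that the cellular family you manufacture in $\mathbb{K}_T^{n+1}$ exists for \emph{every} Suslin tree, with no coherence hypothesis at all; so if your argument were sound, it would prove item (2) for a full Suslin tree as well, contradicting item (1) of the full-tree theorem (a full tree is in particular $n$-full). What $n$-coherence actually buys is much stronger than a large cellular family: arguing as in \cite{THand}'s proof that $\mathbb{K}_T^2$ has a dense metrizable subspace when $T$ is coherent (the result quoted just before the coherent-tree theorem in the paper), one constructs a $\sigma$-disjoint $\pi$-base for $\mathbb{K}_T^{n+1}$ itself. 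One then transfers it to $X^{n+1}$: the finite product of copies of the irreducible map $f$ is again irreducible, and an irreducible image of a compact space with a $\sigma$-disjoint $\pi$-base has one (push each $\pi$-base element $B$ to the open set $X^{n+1}\setminus f^{n+1}(\mathbb{K}_T^{n+1}\setminus B)$), so Lemma \ref{LemArk} finishes. That $\pi$-base construction, not a cellularity computation fed into Theorem \ref{mainthm}, is the genuine content of the positive half, and it is missing from your proposal.
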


\begin{question}
Is there a ZFC example of a Corson compactum $X$ such that $X^n$ has no dense metrizable subspace but $X^{n+1}$ has a dense metrizable subspace?
\end{question}

\begin{question}
Is there an example of a Corson compactum $X$ such that no odd power of $X$ has a dense metrizable subspace while every even power of $X$ has a dense metrizable subspace?
\end{question}

\section*{Acknowledgements}
The second author is grateful to INdAM-GNSAGA for partial financial support and to the Center for Advanced Studies in Mathematics at Ben Gurion University for partial financial support during his visit to Be'er Sheva in 2018. The research of the third author is partially supported by grants from NSERC (455916) and CNRS (UMR7586).

\end{document}